\newtheorem{theorem}{Theorem}
\newtheorem{lemma}[theorem]{Lemma}
\newtheorem{proposition}[theorem]{Proposition}
\newcommand\NN{{\mathbb N}}
\newcommand\ST{{\cal S}}
\renewcommand{\ge}{\geqslant}
\renewcommand{\le}{\leqslant}
\renewcommand{\geq}{\geqslant}
\renewcommand{\leq}{\leqslant}
\begin{document}
\title{Inducibility and universality for trees\thanks{The first and second authors were supported by the European Research Council (ERC) under the European Union's Horizon 2020 research and innovation programme (grant agreement No 648509). The second author was also supported by the MUNI Award in Science and Humanities (MUNI/I/1677/2018) of the Grant Agency of Masaryk university. The fourth author is supported by the Australian Research Council. }}
\author{Timothy F.~N. Chan\thanks{School of Mathematics, Monash University, Melbourne, Australia, and Mathematics Institute and DIMAP, University of Warwick, Coventry CV4 7AL, UK. E-mail: {\tt timothy.chan@monash.edu}.}
  \and Daniel Kr\'al'\thanks{Faculty of Informatics, Masaryk University, Botanick\'a 68A, 602 00 Brno, Czech Republic, and Mathematics Institute, DIMAP and Department of Computer Science, University of Warwick, Coventry CV4 7AL, UK. E-mail: {\tt dkral@fi.muni.cz}.}
	\and Bojan Mohar\thanks{Department of Mathematics, Simon Fraser University, Burnaby, BC, V5A 1S6, Canada. E-mail {\tt mohar@sfu.ca}. On leave from IMFM, Department of Mathematics, University of Ljubljana. B.M.~was supported in part by the NSERC Discovery Grant R611450 (Canada), and by the Research Project J1-2452 of ARRS (Slovenia).}
	\and David R. Wood\thanks{School of Mathematics, Monash University, Melbourne, Australia. E-mail: {\tt david.wood@monash.edu}. }
}
        
\date{}

\maketitle

\begin{abstract}
We answer three questions posed by Bubeck and Linial on the limit densities of subtrees in trees.
We prove there exist positive $\varepsilon_1$ and $\varepsilon_2$ such that
every tree that is neither a path nor a star has inducibility at most $1-\varepsilon_1$,
where the inducibility of a tree $T$ is defined as the maximum limit density of $T$, and
that there are infinitely many trees with inducibility at least $\varepsilon_2$.
Finally, we construct a universal sequence of trees; that is, a sequence in which the limit density of any tree is positive.
\end{abstract}

\section{Introduction}

Many results in extremal graph theory can be framed in terms of determining feasible combinations of subgraph densities, which are known as local profiles. For example, a recent breakthrough in extremal graph theory obtained by Razborov~\cite{Raz08}, Nikiforov~\cite{Nik11}, and Reiher~\cite{Rei16}, 
describes the possible densities of complete graphs in graphs with a given edge density;
also see~\cite{PikR17,LiuPS20} for results on the structure of extremal graphs.
Local profiles of other combinatorial structures such as tournaments have been studied by Linial and Morgenstern~\cite{LinM16}; 
also see~\cite{ChaGKN20,GrzKLV20}.
Another recent result on local profiles of graphs by Huang, Linial, Naves, Peled, and Sudakov~\cite{HuaLNPS14} determines the possible limit densities of $\overline{K_3}$ and $K_3$; also see~\cite{HuaLNPS16}. 
On the other hand, determining the minimum possible sum of densities of $\overline{K_4}$ and $K_4$
is one of the most intriguing problems in extremal graph theory,
and has remained open for more than five decades despite the work of
many researchers~\cite{Tho89,Tho97,FraR93,Fra02,JagST96,Gir79,Nie12,Spe11,Wol10}.
The challenging nature of the shape of graph profiles for some particular graphs
is in line with undecidability results on homomorphism inequalities by Hatami and Norine~\cite{HatN11}, and
results on the complex structure of graphs limits~\cite{GrzKL20,CooKM18}.
In this paper, we are concerned with local profiles of trees as studied by Bubeck and Linial~\cite{BubL16}; 
in particular, we answer three questions on the local profiles of trees posed by them.

In order to state our results precisely, we introduce the following definitions.
Let $T$ be a tree. We denote by $Z_k(T)$ the number of $k$-vertex subtrees in $T$.
An \emph{embedding} of a tree $S$ in $T$ is a subtree of $T$ isomorphic to $S$. Note that in our usage, an embedding can be associated with (possibly multiple) injective homomorphisms from $S$ to $T$, and all injective homomorphisms from $S$ to $T$ with the same image are associated with a single embedding.
The \emph{density} of a $k$-vertex tree $S$ in $T$, denoted by $d(S,T)$,
is the number of embeddings of $S$ in $T$ divided by $Z_k(T)$;
if the number of vertices of $T$ is less than $k$, we set $d(S,T)=0$.
The \emph{$k$-profile} of a tree $T$, denoted by $p^{(k)}(T)$,
is the vector whose entries are indexed by all non-isomorphic $k$-vertex trees, where 
the entry of $p^{(k)}(T)$ indexed by a tree $S$ is equal to $d(S,T)$.
Note that if the number of vertices of $T$ is less than $k$, then $p^{(k)}(T)$ is the zero vector, and
if the number of vertices of $T$ is at least $k$, then the entries of $p^{(k)}(T)$ sum to $1$.

We say that a sequence $(T_n)_{n\in\NN}$ of trees is \emph{convergent}
if the $k$-profiles $p^{(k)}(T_n)$ converge entrywise for every $k\in\NN$.
By Tychonoff's theorem, 
every sequence of trees has a convergent subsequence. The \emph{inducibility} of a tree $S$ is defined as the maximum limit density of $S$ in a convergent sequence of trees. In other words, the inducibility of $S$ is equal to
\[ \limsup_{n\to\infty} \max\{d(S,T) : \text{$T$ is an $n$-vertex tree} \} . \]
This concept was introduced for graphs by Pippenger and Golumbic~\cite{PipG75}; also see \cite{EL15,BS94,Yuster19,HHN14,Hirst14,Siran84,HT18,KNV19}. The definition for trees used here is by Bubeck and Linial~\cite{BubL16}, and it differs slightly from the definition used in~\cite{CzaSW17b,CzaDSW20,DosW18,DosW19}.

Clearly, paths have inducibility $1$ since every subtree of a path is a path. Similarly, stars have inducibility $1$ since every subtree of a star is a star.
Bubeck and Linial~\cite{BubL16} proved that paths and stars are the \textit{only} trees with inducibility $1$.
Motivated by this result, they asked~\cite[Problem 4]{BubL16} whether there are additional trees with inducibility arbitrarily close to $1$, or if not, whether there are infinitely many trees with inducibility bounded away from $0$ by a fixed constant:
\begin{itemize}
\item Does there exist $\varepsilon>0$ such that the inducibility of every tree that is neither a star nor a path is at most $1-\varepsilon$?
\item Does there exist $\varepsilon>0$ such that there are infinitely many trees with inducibility at least $\varepsilon$?
\end{itemize}
We answer both these questions affirmatively. The first question is answered in Theorem~\ref{thm:main} in Section~\ref{sec:main}. The proof relies on several preliminary results in Sections~\ref{sec:major}--\ref{sec:cater}. The second question is answered in Theorem~\ref{thm:sparkler} in Section~\ref{sec:sparkler}; this proof is self-contained. Both theorems give explicit values for $\varepsilon$, although we make no attempt to optimize these values. 

In the case of general graphs, it is well-known that the Erd\H os-R\'enyi random graph $G_{n,p}$
is a universal graph with high probability; that is, the limit density of every graph is positive in $G_{n,p}$.
Bubeck and Linial~\cite[Problem 5]{BubL16} asked whether there exist universal trees:
\begin{itemize}
\item Does there exist a convergent sequence $(T_n)_{n\in\NN}$ of trees in which the limit density $\lim\limits_{n\to \infty} d(S,T_n)$ of every tree $S$ is positive?
\end{itemize}
Our final result is an explicit construction of such a sequence of trees (Theorem~\ref{thm:universal} in Section~\ref{sec:universal}).

Regarding the state of the other problems appearing in \cite{BubL16}, Bubeck, Edwards, Mania and Supko~\cite{BubEMS16} and
Czabarka, Sz\'ekely and Wagner~\cite{CzaSW17a} independently resolved~\cite[Problem 3]{BubL16} by showing that
if the limit density of a $k$-vertex path $P_k$ in a (convergent) sequence of trees equals $0$,
then the limit density of the $k$-vertex star $S_k$ equals $1$. 
Further, results on $5$-profiles of trees can be found in~\cite{BubEMS16},
where additional questions raised in~\cite[Problems 1 and 7]{BubL16} have been answered.

\section{Preliminaries}

The number of vertices of a graph $G$ is denoted by $|G|$.
Given a vertex $v$ in a tree $T$, a \emph{branch} of $T$ rooted at $v$ is a subtree of $T$ formed by a component of the graph $T\setminus v$ together with its edge to $v$.
A branch is \emph{non-trivial} if it is not a single edge; in other words, it does not correspond to a leaf of $T$.
A non-trivial branch rooted at a vertex $v$ is a \emph{fork}
if it is isomorphic to a star (note that $v$ must be a leaf of this star).
The \emph{order} of a fork is its number of (non-root) leaves.
A branch is \emph{major} if it is a non-trivial branch that is not a fork.
A \emph{caterpillar} is a tree $T$ such that
every vertex of $T$ is the root of at most two non-trivial branches.
Finally, a vertex of a tree that is not a leaf is called \emph{internal}.
Observe that a tree is a caterpillar if and only if its internal vertices induce a path.

Czabarka, Sz\'{e}kely and Wagner~\cite[Theorem 1 and Lemma 4]{CzaSW17a} proved the following result about limit densities in trees of bounded radius.

\begin{proposition}[\cite{CzaSW17a}]
\label{prop:radius}
Let $(T_n)_{n\in\NN}$ be a convergent sequence of trees with $|T_n| \to \infty$.
If there exists an integer $K$ such that the radius of each $T_n$ is at most $K$, then
\[\lim_{n\to\infty}d(S_k,T_n)=1\]
for every $k\in\NN$, where $S_k$ is the $k$-vertex star.
\end{proposition}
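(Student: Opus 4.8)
Fix $k$; the goal is $\lim_n d(S_k,T_n)=1$. Every tree on at most three vertices is a star, so assume $k\ge 4$. The plan is to reduce to the path: it suffices to prove $\lim_n d(P_k,T_n)=0$, where $P_k$ is the $k$-vertex path, since then the implication of Bubeck, Edwards, Mania and Supko~\cite{BubEMS16} and Czabarka, Sz\'ekely and Wagner~\cite{CzaSW17a} quoted in the introduction yields $\lim_n d(S_k,T_n)=1$. For $k>2K+1$ this is immediate, since a tree of radius at most $K$ has diameter less than $k-1$ and so contains no copy of $P_k$; thus assume $4\le k\le 2K+1$. Writing $n=|T_n|$, $\Delta_n$ for the maximum degree of $T_n$, and using $Z_k(T_n)\ge N_\star(T_n)$ where $N_\star(T_n)=\sum_v\binom{\deg_{T_n}v}{k-1}$ counts $k$-vertex stars, it remains to prove that the number $N_P(T_n)$ of copies of $P_k$ in $T_n$ satisfies $N_P(T_n)=o(N_\star(T_n))$.

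I would use two structural facts. First, the internal vertices of $T_n$ span a subtree $T'_n$ of radius at most $K-1$: after rooting $T_n$ at a centre, every internal vertex lies within distance $K-1$ of the centre, since a vertex at distance $K$ from a centre has no neighbour farther out and hence is a leaf. Second, the internal vertices of any copy of $P_k$ in $T_n$ are internal in $T_n$, so such a copy is a $(k-2)$-vertex path $Q$ of $T'_n$ with one further $T_n$-neighbour attached at each of its ends $a,b$; this gives $N_P(T_n)\le\sum_Q(\deg_{T_n}a-1)(\deg_{T_n}b-1)$, summed over $(k-2)$-vertex paths of $T'_n$, and since $T'_n$ again has bounded radius the number and endpoint-degrees of such paths are controlled. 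For a matching lower bound on $N_\star(T_n)$, root $T_n$ at a centre with distance classes $L_0,\dots,L_K$: a vertex of $L_j$ has, on average, $|L_{j+1}|/|L_j|$ children, so convexity of $x\mapsto\binom{x}{k-1}$ gives $N_\star(T_n)\ge\sum_{j<K}|L_j|\binom{|L_{j+1}|/|L_j|}{k-1}$, and since $\sum_j|L_j|=n\to\infty$ with boundedly many layers this quantity grows strictly faster than $N_P(T_n)$. The moral is that a path must spend two of its $k$ vertices on internal structure confined to the boundedly-branching tree $T'_n$, so compared with a star it has one fewer degree of freedom and is rarer by a factor tending to $0$.

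\textbf{Main obstacle.} Making that comparison precise and uniform over all trees of radius at most $K$ is the heart of the argument. The crude estimates above ($N_\star(T_n)\ge\binom{\Delta_n}{k-1}$ from one high-degree vertex, or a single application of convexity) degrade exactly when $T_n$ has many internal vertices of comparably large degree, so to handle every case I would induct on the radius bound $K$, exploiting that $T'_n$ is again a bounded-radius tree. The base case $K\le1$ is trivial ($T_n$ is a star, so $N_P=0$), and the case in which $|T'_n|$ stays bounded is dealt with directly: then some vertex of $T_n$ has degree $\Theta(n)$, forcing $N_\star(T_n)=\Omega(n^{k-1})$ while $N_P(T_n)=O(n^2)$. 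A self-contained variant, avoiding the quoted implication, bounds instead the number of \emph{all} non-star $k$-subtrees of $T_n$: each has at least two internal vertices, which form a subtree $W$ of $T'_n$ with $2\le|W|\le k-2$, and is obtained by attaching $k-|W|$ leaves to $W$; hence their number is at most $\sum_{i=2}^{k-2}\sum_{W}\binom{\sum_{w\in W}\deg_{T_n}w}{k-i}$ over $i$-vertex subtrees $W$ of $T'_n$, and the same inductive comparison shows this is $o(Z_k(T_n))$.
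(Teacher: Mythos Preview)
The paper does not prove this proposition; it is quoted from Czabarka, Sz\'ekely and Wagner~\cite{CzaSW17a} (their Theorem~1 and Lemma~4) without argument, so there is no in-paper proof to compare against. Your reduction via Proposition~\ref{prop:plimit} is valid, and given that Proposition~\ref{prop:plimit} is cited as Theorem~1 of~\cite{CzaSW17a} while Proposition~\ref{prop:radius} is cited as Theorem~1 \emph{together with} Lemma~4, it is plausible that~\cite{CzaSW17a} proceeds in exactly this way, with their Lemma~4 supplying the assertion $d(P_k,T_n)\to 0$ for bounded-radius sequences.

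That said, your argument for $d(P_k,T_n)\to 0$ in the range $4\le k\le 2K+1$ is only a sketch, and the gap is substantive. You correctly note that the one-vertex lower bound $N_\star\ge\binom{\Delta_n}{k-1}$ combined with $N_P\le\binom{n}{2}\le C\Delta_n^{2K}$ yields nothing when $k-1\le 2K$, and you propose to repair this by induction on $K$, passing to the tree $T'_n$ of internal vertices. The base case and the sub-case $|T'_n|=O(1)$ are fine, but the heart of the matter---the sub-case $|T'_n|\to\infty$---is never carried out. The inductive hypothesis speaks about densities of subtrees \emph{within} $T'_n$, whereas what you must control is a sum of the form $\sum_Q(\deg_{T_n}a-1)(\deg_{T_n}b-1)$ over $(k-2)$-vertex paths $Q$ of $T'_n$, weighted by $T_n$-degrees at the endpoints, and compare it with $\sum_v\binom{\deg_{T_n}v}{k-1}$. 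You have not explained how knowing that most $k'$-subtrees of $T'_n$ are stars delivers this weighted comparison, and the same unexplained step recurs in your ``self-contained variant''. As written, the proposal isolates the right difficulty but does not resolve it.
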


As mentioned in the introduction, the result below is proved independently in~\cite[Theorem 2]{BubEMS16} and~\cite[Theorem 1]{CzaSW17a}.

\begin{proposition}[\cite{BubEMS16,CzaSW17a}]	
\label{prop:plimit}
Let $k\geq 4$ and let  $(T_n)_{n\in\NN}$ be a convergent sequence of trees with $|T_n| \to \infty$. If $\lim\limits_{n\to \infty} d(P_k,T_n)=0$,  then $\lim\limits_{n\to \infty} d(S_k,T_n)=1$.
\end{proposition}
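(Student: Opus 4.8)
The plan is to argue by contradiction, exploiting that the entries of $p^{(k)}(T)$ sum to $1$: if $d(P_k,T_n)\to0$ but $d(S_k,T_n)\not\to1$, then the density of those $k$-vertex subtrees that are \emph{neither} paths \emph{nor} stars is bounded away from $0$ along a subsequence, and I aim to derive a contradiction. First I would dispose of $k=4$: the only $4$-vertex trees are $P_4$ and $S_4$, so $d(P_4,T)+d(S_4,T)=1$ for every $T$ with $|T|\ge4$ and the statement is immediate. So assume $k\ge5$, call a $k$-vertex subtree \emph{bad} if it is neither $P_k$ nor $S_k$, pass to a subsequence with $d(S_k,T_n)\to1-\delta$ for some $\delta>0$, and note that $\#\{\text{bad }k\text{-subtrees of }T_n\}\ge(\delta-o(1))\,Z_k(T_n)$.

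I would then record the relevant structure. The internal vertices of a bad $k$-subtree $S$ induce a subtree $I(S)$ of $T_n$ with $2\le|I(S)|\le k-2$, and $S$ is obtained from $I(S)$ by attaching pendant leaves, with \emph{at least one} leaf attached away from any fixed vertex of $I(S)$; in particular $S$ has diameter at most $k-2$ and contains a vertex of degree at least $3$. Writing $d_v^{\mathrm{out}}$ for the number of neighbours of $v$ in $T_n$ lying outside a given skeleton $I$, one gets by AM--GM
\[
  \#\{\text{bad }k\text{-subtrees}\}
  =\sum_{m=2}^{k-2}\ \sum_{|I|=m}\ \sum_{\substack{(\ell_v)_{v\in I}\\ \sum_v\ell_v=k-m}}\ \prod_{v\in I}\binom{d_v^{\mathrm{out}}}{\ell_v}
  \ \le\ C(k)\sum_{m=2}^{k-2}\ \sum_{|I|=m}\Big(\max_{v\in I}\deg_{T_n}v\Big)^{k-m}.
\]
The point of the constraint ``$\ell_v\le k-3$'' for any chosen vertex $v\in I$ is that the star count $\#S_k=\sum_v\binom{\deg v}{k-1}$ dominates the right-hand side whenever the trees are controlled by a few very-high-degree vertices, so such trees automatically have bad density $o(1)$; this is the easy regime.

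The dichotomy I would use is whether $T_n$ contains a path on $2k$ vertices. If it does not, then $T_n$ has radius less than $2k$, and Proposition~\ref{prop:radius} gives $d(S_k,T_n)\to1$, contradicting $\delta>0$. If it does, I would prove the following key claim: there is $\varepsilon'=\varepsilon'(\delta,k)>0$ such that a tree with at least $\delta\,Z_k$ bad $k$-subtrees and a path on at least $2k$ vertices has at least $\varepsilon'\,Z_k$ copies of $P_k$; this would force $d(P_k,T_n)\ge\varepsilon'>0$, the desired contradiction. To prove the claim one projects each bad subtree onto the long path $Q$, observes (using the AM--GM estimate above, which rules out the bad subtrees being concentrated near a handful of huge-degree vertices) that a positive fraction of them sit within bounded distance of a ``productive'' stretch of $Q$, and then reroutes such a bad subtree together with two disjoint segments of $Q$ into a copy of $P_k$, in a manner that is injective up to multiplicity bounded in terms of $k$.

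The main obstacle is exactly this claim --- a quantitative, robust form of the path-versus-star dichotomy: one must show that a positive density of branching $k$-subtrees, once the ``few huge stars'' possibility is excluded, genuinely manufactures a positive density of $k$-vertex \emph{paths}, and the rerouting/charging map must be controlled by $\#P_k(T_n)$ itself rather than merely by the number of shorter paths or by the radius. Everything else --- the case $k=4$, the AM--GM count, and the appeal to Proposition~\ref{prop:radius} --- is routine.
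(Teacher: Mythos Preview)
First, a point of context: the paper does not supply its own proof of Proposition~\ref{prop:plimit}; it is quoted as a black box from \cite{BubEMS16} and \cite{CzaSW17a}. There is therefore no in-paper argument to compare yours against, and I assess your outline on its own merits.

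Your reduction is sound up to the key claim: the case $k=4$ is indeed trivial, and the bounded-radius branch is handled cleanly by Proposition~\ref{prop:radius}. The key claim itself --- that positive bad density together with a single copy of $P_{2k}$ forces positive $P_k$-density --- is where the proposal stops being a proof, and you are candid about this. Two concrete problems with the rerouting sketch:
\begin{itemize}
\item The hypothesis ``$T_n$ contains a copy of $P_{2k}$'' hands you exactly one path $Q$ of length $2k-1$, but the bad $k$-subtrees need not lie anywhere near it. Nothing in your AM--GM estimate forces a positive fraction of bad subtrees to sit within bounded distance of $Q$; that estimate only controls how bad subtrees distribute over choices of internal skeleton, not where those skeletons sit inside $T_n$. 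At minimum you would need to replace the single global $Q$ by a local argument that manufactures, near \emph{each} bad subtree, enough path-like structure to reroute into.
\item The asserted multiplicity bound ``bounded in terms of $k$'' is too optimistic. Let $T_n$ be a chain of $n$ double brooms (two adjacent centres, each with $M$ pendant leaves) joined end to end along a spine. Each broom contributes $\Theta(M^{k-2})$ bad $k$-subtrees, yet only $\Theta(M^{2})$ copies of $P_k$ meet it, so any rerouting map from bad subtrees to copies of $P_k$ has multiplicity $\Theta(M^{k-4})$. In this family the bad density is $\Theta(1/M)$, so the assumption $d(\text{bad})\ge\delta$ does cap $M$ and hence the multiplicity --- but only by a constant depending on both $\delta$ and $k$, not on $k$ alone. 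Your sketch neither states the correct dependence nor indicates how to obtain it for a general tree.
\end{itemize}

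In short: the architecture is reasonable and the key claim is plausibly true (indeed it must be, since it is equivalent to the proposition in the unbounded-radius regime), but your rerouting argument does not establish it. What is missing is a genuinely local comparison between the number of non-star $k$-subtrees and the number of copies of $P_k$ that works uniformly across $T_n$ without reference to a single distinguished long path; that is precisely the content of the cited external proofs, and your outline does not reproduce it.
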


A \emph{center} of a tree $T$ is a vertex $v$ such that each branch rooted at $v$ has at most $|T|/2$ edges.
Every tree $T$ has either one or two centers. Moreover, if $T$ has two centers, then $|T|$ is even, the two centers are adjacent, each center has a branch rooted at it with exactly $|T|/2$ edges, and the other center is its neighbor in this branch. A \emph{hub} of a tree $T$ is a vertex $v$ that is the only vertex on the path from $v$ to the nearest center of $T$ that is the root of at least three non-trivial branches. In particular, if a center of $T$ is the root of at least three non-trivial branches, then it is a hub.

\begin{proposition}
\label{prop:center}
Every tree $T$ that is not a caterpillar has at least one and at most two hubs.
\end{proposition}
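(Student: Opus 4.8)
The plan is to recast the statement in terms of branching vertices and rooted-tree ancestry. Call a vertex of $T$ \emph{heavy} if it is the root of at least three non-trivial branches; by the definition of a caterpillar, $T$ fails to be a caterpillar exactly when it has a heavy vertex. I root $T$ at its center if there is one center $\rho$, and if there are two (necessarily adjacent) centers $\rho_1,\rho_2$ I delete the edge $\rho_1\rho_2$ and root the two components $T_1\ni\rho_1$ and $T_2\ni\rho_2$ at $\rho_1$ and $\rho_2$ respectively. With this rooting, the vertices on the path from any vertex $v$ to its nearest center are precisely the ancestors of $v$ (the nearest center is always unique, since two adjacent centers lie at distances differing by $1$ from every vertex). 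Consequently, $v$ is a hub if and only if $v$ is heavy and no proper ancestor of $v$ is heavy.

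Existence is then immediate: since $T$ is not a caterpillar it has a heavy vertex, so a heavy vertex $v$ of minimum depth exists; every proper ancestor of $v$ has strictly smaller depth, hence by minimality is not heavy, so $v$ is a hub.

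For the upper bound the key assertion is: \emph{every non-trivial branch rooted at a center of $T$ contains at most one hub.} Granting it, I conclude by cases on the centers. If a center $\rho$ is heavy, then $\rho$ is itself a hub and is the only hub whose nearest center is $\rho$ (any other such hub would have the heavy ancestor $\rho$). If $T$ has a single center $\rho$ that is not heavy, then $\rho$ has at most two non-trivial branches and is not a hub, so by the assertion $T$ has at most two hubs. If $T$ has two centers $\rho_1,\rho_2$, then every hub with nearest center $\rho_i$ lies in $T_i$; if $\rho_i$ is heavy there is exactly one such hub (namely $\rho_i$), and if $\rho_i$ is not heavy then, since the branch of $\rho_i$ towards the other center is non-trivial and $\rho_i$ has at most two non-trivial branches, $\rho_i$ has at most one non-trivial branch inside $T_i$, which by the assertion holds at most one hub; either way at most one hub has nearest center $\rho_i$, so $T$ has at most two hubs in total.

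The main work, and the step I expect to be the real obstacle, is the assertion itself. Here I would take two distinct hubs $v_1,v_2$ lying in one branch $B$ rooted at a center $\rho$ and let $m$ be their lowest common ancestor. Because $v_1$ and $v_2$ lie in the single branch $B$, $m$ is a proper descendant of $\rho$, so $m$ has a parent; and $m$ is a proper ancestor of each $v_i$, since $m=v_i$ would make one of $v_1,v_2$ a heavy proper ancestor of the other, contradicting that that one is a hub. I would then show $m$ has at least three non-trivial branches: the branch towards $v_1$ and the branch towards $v_2$ --- distinct because $m$ is the lowest common ancestor, and each non-trivial because it contains the non-leaf vertex $v_i$ --- together with the branch of $m$ containing its parent, which is non-trivial because it contains $\rho$ and at least one further vertex (the parent of $m$ if that is not $\rho$, otherwise a second neighbour of $\rho$, which exists since a center of a non-caterpillar is not a leaf, or otherwise the other center in the two-center case). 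Hence $m$ is heavy, contradicting that its descendant $v_1$ is a hub. The one point requiring care is that this argument must be run per branch rather than per center: were $v_1$ and $v_2$ allowed to lie in different branches of $\rho$, their lowest common ancestor could be $\rho$ itself, which has no parent --- and indeed a single center that is not heavy can genuinely have two hubs below it in distinct branches, which is why the final bound is $2$ rather than $1$.
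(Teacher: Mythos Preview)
Your proof is correct and follows essentially the same logical skeleton as the paper's: both characterise hubs as the heavy vertices that are ``first'' when seen from a center, and both split into the single-center and two-center cases. The paper packages the argument slightly differently by passing to the leaf-deleted tree $T'$, where a vertex is heavy precisely when it has degree at least $3$; the ``at most two hubs'' statement then becomes the observation that the component of the center in $T'\setminus W$ (with $W$ the set of degree-$\ge 3$ vertices) consists of vertices of degree at most $2$, hence is a path with at most two exit points into $W$. Your per-branch LCA contradiction reaches the same conclusion but with a bit more machinery; it is a perfectly valid alternative. One small point of presentation: your key assertion is stated for \emph{every} non-trivial branch rooted at a center, which in the two-center case includes the branch of $\rho_1$ towards $\rho_2$, yet your LCA argument as written uses the forest rooting and so only directly covers branches lying inside the appropriate $T_i$. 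This does not affect correctness, since your case analysis only ever invokes the assertion for such branches, but it would be cleaner to restrict the assertion accordingly.
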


\begin{proof}
Let $T'$ be the tree obtained from $T$ by removing all of its leaves. The degree of a vertex $v$ in $T'$ is equal to the number of non-trivial branches rooted at $v$ in $T$. Since $T$ is not a caterpillar, $T'$ is not a path. Therefore, $T'$ contains a vertex of degree at least $3$, so $T$ has at least one hub.

Let $W$ be the set of vertices of $T'$ with degree at least $3$.
Suppose that $T$ has a single center $v_C$.
If $v_C$ has degree at least three in $T'$,
then $v_C$ is the only hub of $T$.
Otherwise, the degree of $v_C$ in $T'$ is equal to $1$ or $2$ and
there exists at least one and at most two vertices $w \in W$ such that
there is no other vertex of $W$ on the unique path between $v_C$ and $w$.
These vertices $w$ are the hubs of $T$.

In the case that $T$ has two centers $v_C$ and $v'_C$,
which are necessarily adjacent,
then each center is a hub if its degree in $T'$ is at least three.
Otherwise,
there exists at most one vertex $w\in W$ such that
the unique path between $v_C$ and $w$ contains neither another vertex of $W$ nor $v'_C$. 
Similarly,
there exists at most one vertex $w\in W$ such that
the unique path between $v'_C$ and $w$ contains neither another vertex of $W$ nor $v_C$. 
Hence, $T$ has at most two hubs.
\end{proof}

Let $S_0$ and $S$ be embeddings of trees in a tree $T$ with $|S_0|=|S|=n$, and let $k$ be an integer less than $n$. (In fact, we only use $k\leq 3$). 
We say that $S$ can be obtained from $S_0$ by \emph{moving} $k$ edges if the intersection of $S_0$ and $S$ is a subtree of $T$ with $n-k$ vertices (see Figure~\ref{fig:move}). In this sense, $S$ is said to be obtained from $S_0$ by \emph{removing} the edges of $S_0$ that are not contained in $S$, and then \emph{adding} the edges of $S$ that are not contained in $S_0$. 

\begin{figure}
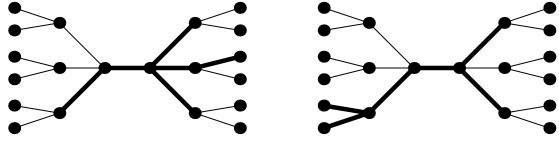

\begin{center}
\epsfbox{tprofile-7.mps}
\hskip 8mm
\epsfbox{tprofile-8.mps}
\end{center}
\caption{Two embeddings of $7$-vertex trees that can be obtained from each other by moving two edges. The edges of the embeddings are in bold.}
\label{fig:move}
\end{figure}

We next bound the number of vertices that can become a center of an embedding of a tree when at most three edges are moved.

\begin{proposition}
\label{prop:move}
Let $S_0$ be an embedding of a tree with at least $17$ vertices in another tree $T$.
There exists a set $X$ of at most $8$ vertices of $T$ such that
if three or fewer edges of $S_0$ are moved to produce an embedding $S$ of a tree in $T$,
then each center of $S$ is contained in $X$.
\end{proposition}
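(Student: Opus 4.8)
The idea is that, under the paper's definition, a ``center'' of a tree $U$ is a vertex $v$ such that every component of $U-v$ has at most $|U|/2$ vertices (the branch corresponding to a component having the same number of edges as the component has vertices); such a vertex is stable under small perturbations of $U$, so the center of $S$ should sit near the center(s) of $S_0$, and the plan is to make this quantitative by tracking component sizes.

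Write $n=|S_0|\ge 17$; let $S$ arise from $S_0$ by moving $k\le 3$ edges, set $R:=S\cap S_0$ (a common subtree with $|R|=n-k\ge n-3$) and $E:=S_0\setminus R$ (so $|E|=k\le 3$), and let $c$ be a center of $S$. First I would check that $c\in R$: if not, then $R$ is a connected subtree of $S$ not containing $c$, hence lies in one component of $S-c$, which therefore has at least $|R|\ge n-3>n/2$ vertices, contradicting that $c$ is a center of $S$. Next I would show that every component of $S_0-c$ has at most $n/2+3$ vertices: since $R\subseteq S$ and $c\in R$, each component of $R-c$ lies inside a component of $S-c$ and so has at most $n/2$ vertices; because $S_0$ is a tree, two distinct components of $R-c$ cannot lie in the same component of $S_0-c$, so each component of $S_0-c$ is either contained in $E$ (hence has at most $3$ vertices) or is obtained from a single component of $R-c$ by attaching at most $|E|\le 3$ extra vertices.

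If $c\in C_0$ (the set of centers of $S_0$) there is nothing more to prove, so assume $c\notin C_0$; let $c_0$ be the center of $S_0$ nearest to $c$, and let $D$ be the component of $S_0-c_0$ containing $c$. If $|C_0|=1$ then $|D|\le n/2-1$ (a component of size $n/2$ would force a second center), and if $|C_0|=2$ then $c$ lies strictly on $c_0$'s side of the central edge, so again $|D|\le n/2-1$. Let $\ell=d_{S_0}(c_0,c)$ and let $c_0,u_1,\dots,u_\ell=c$ be the path from $c_0$ to $c$, which lies in $D$. If $B$ denotes the union of those components of $S_0-c$ contained in $D$ that avoid $c_0$, then the component of $S_0-c$ containing $c_0$ has $n-1-|B|$ vertices, so by the previous paragraph $|B|\ge n/2-4$. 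Now $D$ is the disjoint union of $B$, of $\{c\}$, of $u_1,\dots,u_{\ell-1}$, and of the branches at $u_1,\dots,u_{\ell-1}$ lying in $D$ and avoiding $c$; writing $s$ for the number of vertices in the last group, we get $|D|\ge |B|+\ell+s$, hence $\ell+s\le |D|-(n/2-4)\le 3$. In particular $\ell\le 3$ and $|D|\ge n/2-3$; moreover a short case check on $\ell\in\{1,2,3\}$ shows that at most $3$ vertices of $D$ can be a center of some such $S$: for $\ell=1$ this vertex is the unique neighbour of $c_0$ in $D$; for $\ell=2$ it is one of the (at most two, since $s\le 1$) neighbours of $u_1$ other than $c_0$; and for $\ell=3$ the path $c_0u_1u_2u_3$ must be induced with $u_1,u_2$ of degree $2$, so $c$ is forced.

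To finish, I would take $X$ to be $C_0$ together with these per-direction contributions. The components of $S_0-c_0$ having at least $n/2-3$ vertices are pairwise disjoint, so since $3(n/2-3)>n-1$ for $n\ge 17$ there are at most two of them when $|C_0|=1$, giving $|X|\le 1+2\cdot 3=7$; when $|C_0|=2$, the candidate components on the $c_0$-side have total size at most $n/2-1<2(n/2-3)$ for $n\ge 11$, so there is at most one on each side, giving $|X|\le 2+2\cdot 3=8$. The main obstacle is the bookkeeping of this last step --- in particular, verifying that each ``direction'' $D$ yields at most three candidate centers over \emph{all} admissible $S$ simultaneously (the delicate point being that the extremal case $\ell=3$ forces an induced path in $S_0$, which eliminates the extra leaf-candidates that could otherwise appear at distance $2$), and checking that $17$ is precisely the threshold that guarantees at most two heavy branches at a single center. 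By contrast, the reduction $c\in R$ and the bound ``$\le n/2+3$'' are routine, and they are exactly what makes $X$ depend only on $S_0$ and $T$, not on $S$.
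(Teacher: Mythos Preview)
Your proof is correct and follows essentially the same strategy as the paper's: both arguments show that any center $c$ of $S$ must be a vertex of $S_0$ at which every branch has at most $n/2+3$ edges, and then bound the number of such vertices by analysing the at most two ``heavy'' branches rooted at a center of $S_0$. Your derivation is in fact more careful than the paper's in one respect: you explicitly prove that $c\in R\subseteq V(S_0)$, whereas the paper tacitly assumes $w\in V(S_0)$ when it speaks of ``each branch of $S_0$ rooted at $w$''. Your per-branch count (at most three candidates in each heavy direction) can be streamlined by noting that the subtrees $T_v$ rooted at candidates $v\in D$ pairwise intersect (since each has size $\ge |D|-2$ and $|D|\ge 5$), hence form a chain with strictly decreasing sizes in $\{|D|,|D|-1,|D|-2\}$; this replaces your case split on $\ell$ and the ``delicate point'' you flag.
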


\begin{proof}
Let $n=|S_0|\ge 17$. 
Let $X$ be the set of vertices $v$ of $S_0$ such that each branch rooted at $v$ has at most $n/2+3$ edges.
We claim that $X$ has the property given in the statement of the lemma.
Indeed, if $S$ is an embedding obtained from $S_0$ by moving at most three edges and $w$ is a center of $S$,
then each branch of $S$ rooted at $w$ has at most $n/2$ edges and
so each branch of $S_0$ rooted at $w$ has at most $n/2+3$ edges.
Hence, $w$ is contained in $X$.

It remains to estimate $|X|$. 
We call a branch $B$ of $S_0$ \emph{significant} if $B$ is rooted at a center of $S_0$,
has at least $n/2-3$ edges, and
does not contain the other center (if another center exists). Every vertex $x\in X$ is either a center or is contained in a significant branch\textemdash otherwise, the branch rooted at $x$ containing the center(s) has at least $(n-1) - (n/2-4) + 1 = n/2+4$ edges.
Since significant branches are edge-disjoint and $3(n/2-3)=(n-1)+(n/2-8)>n-1$,
$S_0$ has at most two significant branches.
Note that each significant branch has at most $\lfloor n/2\rfloor$ edges
since it is rooted at a center of $S_0$, so the other branches rooted at the same center contain at least $\lceil n/2\rceil -1$ edges in total.

Therefore, if $n$ is odd, each significant branch has at most three vertices $w$ such that
the branch rooted at $w$ containing the center vertex has at most $\lceil n/2 \rceil +2 = \lfloor n/2 \rfloor +3$ edges.
If $n$ is even and $S_0$ has two centers, then the branches rooted at each center that contain the other center have exactly $n/2$ edges. So again, each significant branch has at most three vertices $w$ such that
the branch rooted at $w$ containing the center(s) has at most $n/2 + 3$ edges.
Lastly, if $n$ is even and $S_0$ has only one center,
then we use the fact that there is at most one significant branch with exactly $n/2$ edges.
This branch, if it exists, has at most four vertices $w$ such that
the branch rooted at $w$ containing the center has at most $n/2 + 3$ edges;
any other significant branch has at most three such vertices $w$.
In each case, $|X| \leq 8$. 
\end{proof}

Note that the bound on $|X|$ in Proposition~\ref{prop:move} is best possible since it is attained when $S_0$ is a path with an even number of vertices.

We finish this section by bounding the number
of vertices that can become a hub of an embedding of a tree when at most three edges are moved.

\begin{proposition}
\label{prop:move2}
Let $S$ be a non-caterpillar tree with at least $17$ vertices, and fix an embedding of a tree $S'$ with $|S'|=|S|$ in a tree $T$.
There exists a set $X$ of at most $144$ vertices of $T$ such that
if an embedding of $S$ in $T$ can be obtained by moving three or fewer edges of $S'$,
then each hub of the obtained embedding of $S$ is contained in $X$.
\end{proposition}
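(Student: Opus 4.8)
The plan is to bootstrap from Proposition~\ref{prop:move}. Since $|S'|=|S|\ge 17$, applying that proposition with $S_0:=S'$ produces a set $X_0$ of at most $8$ vertices of $T$ such that the center of every embedding of a tree obtained from $S'$ by moving at most three edges lies in $X_0$; moreover, as its proof makes clear, $X_0\subseteq V(S')$. In particular this applies to every embedding $S^*$ of $S$ obtainable from $S'$ in this way, and since $S$, hence $S^*$, is not a caterpillar, Proposition~\ref{prop:center} guarantees that $S^*$ has one or two hubs, each with a nearest center $c\in X_0$. As $c\in V(S')\cap V(S^*)$, the common subtree $R:=S'\cap S^*$ (which has at least $n-3$ vertices, where $n=|S|$) contains $c$. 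It thus suffices to show that for each of the $\le 8$ vertices $c\in X_0$ there are at most $18$ vertices of $T$ that can serve as a hub with nearest center $c$ of some such $S^*$; this gives $|X|\le 8\cdot 18=144$.

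Fix $c\in X_0$. The case $w=c$ is trivial, so suppose $w\ne c$; then $c$ has at most two non-trivial branches in $S^*$ (else $c$ would be the unique hub). Consider the path $P$ from $c$ to $w$ in $S^*$: each of its internal vertices has exactly two non-trivial branches in $S^*$, one toward $c$ and one toward $w$. Write $S^*$ as the union of two subtrees meeting only in $w$: the subtree $D$ consisting of $w$ together with the branches of $w$ in $S^*$ not containing $c$, and the subtree $B=S^*\setminus(D\setminus\{w\})\supseteq P$. Since $S^*\cong S$ and $w$ realizes a hub of $S$, the isomorphism types of $B$ and $D$, the position of $c$ in $B$, and the distance $d_0:=\mathrm{dist}_{S^*}(c,w)$ are all determined by $S$ and by which of the $\le 2$ hubs of $S$ is in play. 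The key point is that because $S^*$ and $S'$ are subtrees of the same (cycle-free) tree $T$ sharing at least $n-3$ vertices, the path $P$ differs from a path in $S'$ in at most three edges, and at every internal vertex of $P$ other than the $\le 3$ vertices affected by the edits the requirement of having exactly two non-trivial branches forces the continuation of the path. Since also $c$ has at most $2+3$ non-trivial branches in $S'$, one concludes that $w$ is obtained from $c$ by following one of boundedly many essentially forced paths in $S'$ (or in $S^*$) of the prescribed length $d_0$, where the edits contribute only bounded extra freedom and also cover the case that $w$ itself is one of the $\le 3$ vertices of $S^*\setminus S'$. A careful count of these possibilities yields at most $18$ candidates for $w$.

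The main obstacle is exactly this count. One cannot simply argue that a hub of $S^*$ lies near a hub or a center of $S'$: a single edit can cheaply turn a ``boring'' vertex of $S'$ that carries just one non-trivial branch (for instance, the endpoint of a pendant path) into a hub of $S^*$, and a hub of $S^*$ can be arbitrarily far (in $T$) from the center of $S^*$. The bound must instead come from the rigidity of the large common subtree $S^*\cap S'$ together with the absence of cycles in $T$, which pin down the position of $w$ relative to $c$ up to boundedly many alternatives; carrying out the case analysis — tracking which of the at most three edited edges and vertices affect the branch structure at $c$, along $P$, and at $w$, and in how many ways each can do so — is where the constant $144$ is consumed.
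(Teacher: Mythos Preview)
Your overall architecture matches the paper's: apply Proposition~\ref{prop:move} to get a set $X_0$ of at most $8$ candidate centers, use Proposition~\ref{prop:center} to see that at most two hub--center distances occur in $S$, and then argue that for each $c\in X_0$ only boundedly many vertices can serve as a hub with nearest center $c$. The final multiplication $8\cdot 18=144$ is also the same.

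The gap is that you never actually perform the count giving $18$. You write ``A careful count of these possibilities yields at most $18$ candidates for $w$'' and later ``carrying out the case analysis \dots\ is where the constant $144$ is consumed,'' but this is precisely the content of the proposition; without it you have only restated the strategy. Your informal mechanism --- following an ``essentially forced'' path in $S'$ from $c$, with the $\le 3$ edits contributing bounded extra branching --- is the right intuition, but as stated it does not yield a concrete bound: you have not specified how many branches the path can take at $c$, how an edit can create new branch points along the way, or how these combine.

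The paper packages this count cleanly via a quantity it calls the \emph{resistance} of a vertex $z$ in $S'$: the number of edges, not incident with $z$, lying in branches of $S'$ at $z$ other than the two largest. The point is that the total resistance accumulated along the $c$--$w$ path (including at $c$) is a lower bound on the number of edge removals needed to make every vertex on that path the root of at most two non-trivial branches, hence is at most $3$. The set $Z_c$ of internal vertices reachable from $c$ with cumulative resistance $\le 3$ then induces a subtree of $S'$ with at most $9$ leaves (a short case check on the number $\delta$ of branches at $c$), and intersecting with the two allowed distances gives at most $18$ vertices. Supplying this, or an equivalent explicit bound, is what your argument is missing.
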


\begin{proof}
Let $X_0$ be the set of the vertices from Proposition~\ref{prop:move} applied with $S_0=S'$, and
let $D$ be the set of distances between the hubs of $S$ and the nearest center in $S$.
By Proposition~\ref{prop:center}, $S$ has at most two hubs, so $|D|\leq 2$.

For a vertex $z$ in the embedding of $S'$, define the \emph{resistance} of $z$ as
the number of edges not incident with $z$ that are contained in branches of $S'$ rooted at $z$
with the two largest branches excluded.
Informally speaking,
the resistance of $z$ is the number of edges that must be removed from $S'$ so that $z$ is no longer the root of three non-trivial branches, and therefore not a candidate hub.

Consider a vertex $x\in X_0$ that is
a center of an embedding of $S$ in $T$ obtained by moving at most three edges of $S'$.
Observe that in this embedding of $S$,
a vertex $v \neq x$ of $T$ can be a hub whose nearest center is $x$
only if the following holds:
\begin{itemize}
\item $v$ is an internal vertex of $S'$,
\item the distance $d$ between $v$ and $x$ belongs to the set $D$, and
\item the sum of the resistance of $x$ and the resistances of the internal vertices on the path between $v$ and $x$
      is at most $3$.
\end{itemize}
Let $X$ be the union of $X_0$ with the set of vertices $v$ that satisfy these three conditions for some $x\in X_0$.

For a vertex $x\in X_0$, let $Z_x$ be the union of $\{x \}$ with the set of internal vertices $z$ of $S'$ such that
the sum of the resistance of $x$ and
the resistances of the internal vertices on the path between $z$ and $x$
is at most $3$. 
Observe that if a vertex $z$ belongs to $Z_x$,
then all vertices on the path between $z$ and $x$ also belong to $Z_x$.
Define $S_Z$ to be the subtree of $S'$ induced by $Z_x$,
and note that the resistance of $z$ is an upper bound on the number of leaves of $S_Z$ lying in non-trivial branches rooted at $z$ with the two largest branches excluded.
Let $\delta$ be the number of branches of $S_Z$ rooted at $x$.
Since each of the $\delta$ branches of $S_Z$ rooted at $x$
has at most $4-\gamma \leq \min\{4,6-\delta\}$ leaves,
where $\gamma\ge\delta-2$ is the resistance of $x$,
the tree $S_Z$ has at most $9$ leaves.
This implies that the number of vertices of $Z_x$ lying at a distance contained in $D$ from $x$ is at most $18$.
Hence, the set $X$ contains at most $8\cdot 18=144$ vertices.
\end{proof}

\section{Inducibility of trees with three large branches}
\label{sec:major}

In this section, we present a part of the proof of Theorem~\ref{thm:main} for trees with three large branches rooted at a hub.
For a $k$-vertex tree $S$ and a host tree $T$,
one approach would be to construct a function $f_{S,T}$ that
maps each embedding of $S$ in $T$ to an embedding of a $k$-vertex subtree of $T$ non-isomorphic to $S$ such that at most $\alpha$ embeddings of $S$ are mapped to the same subtree of $T$, where $\alpha$ is  
a constant independent of $S$ and $T$. 
This would imply that the inducibility of $S$ is at most $\alpha / (\alpha + 1)$.
An explicit construction of such a function $f_{S,T}$ is technical,
so we prove its existence implicitly using a discharging argument.

\begin{theorem}
\label{thm:major}
Assume $S$ is a $k$-vertex tree $(k\ge 17)$ with a fixed hub $v_S$ that is either adjacent to at most one leaf, or is the root of at least three major branches and at most one fork.
If $T$ is a tree with radius at least $4k$,
then $d(S,T)\leq 1-10^{-7}$.
\end{theorem}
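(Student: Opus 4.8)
The plan is to bound $d(S,T)$ by a discharging argument that implicitly realises an injection with bounded fibres from embeddings of $S$ into embeddings of $k$-vertex trees other than $S$. Write $N$ for the number of embeddings of $S$ in $T$ and $N'$ for the number of embeddings of $k$-vertex trees not isomorphic to $S$, so that $d(S,T)=N/(N+N')$. It then suffices to give each embedding of $S$ one unit of charge and redistribute it, entirely onto embeddings of trees not isomorphic to $S$ obtained by moving at most three edges, so that every such embedding ends with charge at most $\alpha:=10^{7}-1$; this gives $N\le\alpha N'$ and hence $d(S,T)\le\alpha/(\alpha+1)\le 1-10^{-7}$. The redistribution is described by a discharging rule rather than an explicit map, precisely because the map itself is cumbersome to write down.

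First I would describe the admissible perturbations. Fix an embedding $S_0\cong S$ and let $w$ be the image of the hub $v_S$. Since the radius of $T$ is at least $4k>|S_0|$, the eccentricity of $w$ in $T$ is at least $4k$, so $T$ contains a path $P$ starting at $w$ with at least $4k$ edges. As $v_S$ is the root of at least three non-trivial branches, $S$ is not a caterpillar, and a path of $T$ can meet at most $|S_0|-2$ vertices of $S_0$; hence $P$ has a run of at least three consecutive vertices lying outside $S_0$, attached to $S_0$ at a vertex $a$ on some branch $B^{\ast}$ of $S_0$ rooted at $w$ (possibly $a=w$). An admissible perturbation enlarges $B^{\ast}$ by appending one, two or three of these external vertices at $a$, while deleting the same number of vertices as successive leaves of one or more of the \emph{other} non-trivial branches rooted at $w$. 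Each such perturbation moves at most three edges, yields a genuine subtree of $T$ on $k$ vertices, and (since branch sizes change by at most three and $k\ge 17$) moves the nearest centre only slightly, so one retains control over where the hub of the result can lie.

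The crux is to show that for every embedding $S_0\cong S$, at least one admissible perturbation produces a tree $S'\not\cong S$, and this is exactly where the hypothesis on $v_S$ enters. If $v_S$ is adjacent to at most one leaf, then enlarging $B^{\ast}$ while shrinking other non-trivial branches changes the multiset of branch sizes at $w$; by choosing which branches to shrink and by how much, one always reaches a multiset not occurring at the corresponding vertex of $S$, unless the branch structure at $v_S$ is so rigid that $S$ is a caterpillar, a contradiction. If instead $v_S$ is the root of at least three major branches and at most one fork, then deleting a bounded number of leaves from a major branch either turns it into a fork (changing the number of forks at $w$) or changes its isomorphism type, so again some perturbation escapes the isomorphism class of $S$. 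Establishing this escape property \emph{uniformly} over all admissible $S$ and all their embeddings — ruling out every configuration in which all admissible perturbations preserve the isomorphism type — is the step I expect to be the main obstacle.

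Finally I would run the discharging: each embedding $S_0\cong S$ sends its unit of charge, split equally, to all embeddings of trees not isomorphic to $S$ that it reaches by an admissible perturbation (a nonempty set, by the crux). To bound the charge received by a fixed embedding $S'$ of a tree not isomorphic to $S$, note that every contributing $S_0$ is isomorphic to $S$ and obtained from $S'$ by moving at most three edges; so by Proposition~\ref{prop:move} (with $S'$ in the role of $S_0$) the centre of every contributor lies in a set of at most eight vertices of $T$ determined by $S'$, and by Proposition~\ref{prop:move2} its hub lies in a set of at most $144$ vertices determined by $S'$. Fixing one such hub location $w$, the perturbation carrying a contributor to $S'$ is essentially forced, since the edges removed and added are exactly the symmetric difference of the edge sets of $S_0$ and $S'$; moreover any local degree of $T$ that makes $S'$ reachable from many contributors with hub $w$ (e.g.\ a high-degree vertex near $w$ at which a branch can be lengthened) simultaneously enlarges the perturbation menu of each such contributor, so the splitting compensates and the charge landing on $S'$ stays bounded by an absolute constant. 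Tracking these constants — the $144$ hub locations, the eight centre locations, a constant number of perturbation shapes, and this degree cancellation — gives $\alpha\le 10^{7}-1$, completing the proof. The boundary cases I would check most carefully are those where $B^{\ast}$ or the shrunk branches are short, and where the external run of $P$ attaches at $w$ itself.
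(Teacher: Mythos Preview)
Your high-level strategy — discharging via edge-moves of bounded size, combined with Proposition~\ref{prop:move2} to control where the hub of a contributor can sit — matches the paper's. But two of the steps you flag as routine are in fact the whole difficulty, and your sketch does not supply the ideas needed to carry them out.

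\textbf{The ``crux'' is not proved, and your proposed argument does not work.} You want to show some admissible perturbation yields a tree $\not\cong S$, and you argue via the multiset of branch sizes at $w$. But equal branch-size multisets at the images of $v_S$ are neither necessary nor sufficient for isomorphism of the resulting trees; and even getting a different multiset at $w$ does not rule out an isomorphism sending $v_S$ elsewhere. The paper avoids this entirely: instead of adding external path vertices to one branch and deleting leaves from others, it removes one terminal leaf from each of three designated branches (forming a ``stub'') and then adds back three edges at the attachment vertices $v_A,v_B,v_C$ in an unbalanced way (two to one vertex, one to another, none to the third). This visibly changes the \emph{degree sequence} of the tree, giving non-isomorphism by a one-line check, case by case according to the degrees $d_A\ge d_B\ge d_C$ and the numbers $s_A,s_B,s_C$ of available edges. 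Your perturbations do not alter the degree sequence in any controlled way.

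\textbf{The ``degree cancellation'' is where the real work is, and your scheme does not provide it.} You assert that if many contributors with hub $w$ reach the same $S'$, then each of them has a large perturbation menu, so the splitting compensates. But in your scheme each embedding of $S$ has only a \emph{bounded} perturbation menu (one external path, at most three appended vertices, a handful of leaf deletions), regardless of the local degrees of $T$; so there is no compensating growth. The paper's mechanism is different and essential: all embeddings of $S$ that share a common stub (there are exactly $s_As_Bs_C$ of them, where $s_X$ is the number of ``active'' edges at $v_X$) are handled together, and the constructed set $\mathcal{S}$ of non-$S$ embeddings has size at least $s_As_Bs_C/12$. The reverse count — how many stubs can produce a fixed $S''$ — is then bounded by $144\cdot\binom{8}{3}\cdot 6$, using that grafts must come from a set of at most eight branches. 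This stub structure is exactly what turns the vague ``degree cancellation'' into an actual inequality; without it your discharging does not close.

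A smaller point: your path $P$ depends on the embedding $S_0$, which makes the reverse reconstruction from $S'$ awkward. The paper fixes a global vertex $v_T$ with two long disjoint paths $P_1,P_2$ and defines $Q$ canonically from $v_{S\to T}$ and $v_T$, so that when recovering a stub from $S''$ one knows which edges could have come from $Q$.
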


\begin{proof}
Let $v_T$ be a vertex of $T$ such that
there exist $(2k+1)$-vertex paths $P_1$ and $P_2$ starting at $v_T$ that
are disjoint except at $v_T$ itself; such a choice is possible because the radius of $T$ is at least $4k$.
For every vertex $v$ of $T$, fix a linear order $\preceq_v$ of the edges incident with $v$.

If there is at most one leaf adjacent to $v_S$,
then we say that every non-trivial branch rooted at $v_S$ is \emph{important};
otherwise, a branch rooted at $v_S$ is said to be \emph{important} only if it is major.
Note that there are at least three important branches regardless of which of the two cases
described in the statement of the theorem apply.
In this proof,
a \emph{stub} is an embedding of a $(k-3)$-vertex tree $S'$ in $T$ with a distinguished vertex $v'$ and three distinguished branches,
together with a correspondence between the distinguished branches and three (isomorphism classes of) branches of $S$ rooted at $v_S$ such that
it is possible to add a single leaf to each of the distinguished branches of $S'$ so that
there is an isomorphism from $S'$ to $S$ that maps $v'$ to $v_S$ and
the vertices of each of the distinguished branches of $S'$ to the vertices of the corresponding branch of $S$.
The three distinguished branches of the stub are referred to as \emph{grafts}.

We next introduce a canonical way of obtaining a stub from an embedding of $S$ in $T$.
For an embedding of $S$ in $T$,
let $v_{S\to T}$ be the vertex of $T$ corresponding to the hub $v_S$;
if there are two possible choices for $v_{S\to T}$, we choose an arbitrary one.
Consider the three important branches of the embedding rooted at $v_{S\to T}$
whose edges incident with $v_{S\to T}$ appear earliest in the linear order $\preceq_v$.
These three branches will be denoted by $S_A$, $S_B$, and $S_C$;
we will decide which branch is $S_A$, which is $S_B$, and which is $S_C$ later in the proof.
Consider the DFS traversal of the branches $S_A$, $S_B$, and $S_C$ from $v_{S\to T}$ such that
the edges at each vertex $v$ of the branches are visited in the order given by $\preceq_v$; that is, 
a part of the branch joined by an edge earlier in the order $\preceq_v$ is explored first.
We obtain the stub $S'$ by removing the leaf of the embedding that appears last in the DFS traversal in the branch $S_A$,
the leaf that appears last in the branch $S_B$, and
the leaf that appears last in the branch $S_C$.
The branches obtained from $S_A$, $S_B$, and $S_C$ are the grafts of the stub $S'$.

Let $v_A$, $v_B$, and $v_C$ be the vertices of the branches $S_A$, $S_B$, and $S_C$ adjacent
to the removed leaves of the embedding, and
let $d_A$, $d_B$, and $d_C$ be the degrees of $v_A$, $v_B$, and $v_C$ in the embedding of $S$, respectively.
Fix the indexing of the branches $S_A$, $S_B$, and $S_C$ so that $d_A\ge d_B\ge d_C\ge 2$.
The edges of $T$ incident with $v_A$, $v_B$, and $v_C$, respectively, that
appear in the orders $\preceq_{v_A}$, $\preceq_{v_B}$, and $\preceq_{v_C}$
after the edge of the embedding of $S$ that is visited second-last by the DFS traversal
are referred to as \emph{active},
with the possible exception of the edge towards the vertex $v_{S\to T}$; 
that is, the edge incident with $v_A$, $v_B$, or $v_C$ on the path to $v_{S\to T}$ is never active.
In particular, if $d_A=2$ then all edges incident with $v_A$ except the one towards the vertex $v_{S\to T}$ are active.
Observe that no active edges are contained in the stub $S'$ and
the only active edges contained in the embedding of $S$ are the three edges incident with the removed leaves.
Let $s_A$, $s_B$, and $s_C$ be the number of active edges incident with $v_A$, $v_B$, and $v_C$, respectively.

Observe that if a stub $S'$ is fixed, including the choice of the distinguished vertex and
grafts,
the vertices $v_A$, $v_B$ and $v_C$ are uniquely determined:
they are the last vertices in the DFS traversal uniquely determined by the orders $\preceq_v$
whose distance is $1$ less than the distance of the missing leaf.
Hence, the same stub $S'$ can be obtained from exactly $s_As_Bs_C$ embeddings of a tree $S$ in the tree $T$.

Define $Q$ to be the unique path in the tree $T$ between the vertices $v_{S\to T}$ and $v_T$
prolonged by $P_1$ if $P_1$ does not contain the edge from $v_T$ towards $v_{S\to T}$ and prolonged by $P_2$ otherwise.
Since $P_1$ and $P_2$ each have $2k+1$ vertices,
$Q$ has at least $k+1$ vertices not contained in the embedding of $S$.
We next construct a set $\ST$ of embeddings of several $k$-vertex trees non-isomorphic to $S$ in $T$ as follows.
\begin{itemize}
\item If $s_A\ge 3$ and $s_A\ge s_B$,
      then $\ST$ contains all embeddings obtained from $S'$
      by adding two active edges incident with $v_A$ and an active edge incident with $v_C$.
\item If $3\le s_A<s_B$ and $s_B\ge s_C$,
      then $\ST$ contains all embeddings obtained from $S'$
      by adding an active edge incident with $v_A$ and two active edges incident with $v_B$.
\item If $3\le s_A<s_B<s_C$ and $d_B\not=d_C+1$,
      then $\ST$ contains all embeddings obtained from $S'$
      by adding an active edge incident with $v_A$ and two active edges incident with $v_C$.
\item If $3\le s_A<s_B<s_C$ and $d_A-1\not=d_B=d_C+1$,
      then $\ST$ contains all embeddings obtained from $S'$
      by adding two active edges incident with $v_B$ and an active edge incident with $v_C$.
\item If $3\le s_A<s_B<s_C$ and $d_A-1=d_B=d_C+1$,
      then $\ST$ contains all embeddings obtained from $S'$
      by adding an active edge incident with $v_B$ and two active edges incident with $v_C$.
\item If $s_A<3$, $s_B\ge 3$ and $s_B\ge s_C$,
      then $\ST$ contains all embeddings obtained from $S'$
      by adding an active edge incident with $v_A$ and two active edges incident with $v_B$.
\item If $s_A<3\le s_C$, $s_B<s_C$ and $d_B\not=d_C+1$,
      then $\ST$ contains all embeddings obtained from $S'$
      by adding an active edge incident with $v_A$ and two active edges incident with $v_C$.
\item If $s_A<3\le s_C$, $s_B<s_C$ and $d_A\not=d_B=d_C+1$,
      then $\ST$ contains all embeddings obtained from $S'$
      by adding an active edge incident with $v_B$ and two active edges incident with $v_C$.
\item If $s_A<3\le s_C$, $s_B<s_C$ and $d_A=d_B=d_C+1$,
      then $\ST$ contains all embeddings obtained from $S'$
      by adding three active edges incident with $v_C$.
\item If $s_A$, $s_B$, and $s_C$ are all less than $3$,
      then $\ST$ contains the unique tree that is obtained from $S'$
      by adding the first three edges of $Q$ that are not already contained in $S'$.
\end{itemize}

The above ten cases cover all values of $s_A$, $s_B$, $s_C$, $d_A$, $d_B$, and $d_C$ satisfying $s_A, s_B, s_C \geq 1$ and $d_A \ge d_B \ge d_C \ge 2$.
In each case, the degree sequence of every tree in $\ST$ is different from the degree sequence of $S$.
For example, in the first case, trees in $\ST$ contain more vertices of degree $d_A+1$ than $S$, and
in the last case, the tree in $\ST$ has fewer leaves than $S$.
Therefore, none of the embeddings in $\ST$ is an embedding of a tree isomorphic to $S$.
In all but the last case, $|\ST| \geq s_As_Bs_C/12$.
For example, in the first case,
\[ |\ST| = \binom{s_A}{2} s_C = \frac{s_A(s_A-1) s_C}{2} \geq \frac{s_A^2 s_C}{3} \geq \frac{s_A s_B s_C}{3} \geq \frac{s_A s_B s_C}{12}. \]
In the last case, $|\ST|=1\ge s_As_Bs_C/8$.
This implies that $|\ST|$ is at least the number of embeddings of $S$ yielding the stub $S'$ divided by $12$.

Fix an embedding $S''$ of a $k$-vertex tree that is not isomorphic to $S$.
We now estimate the number of stubs $S'$ associated with an embedding of $S$ whose corresponding set $\ST$ contains $S''$.
We will create a stub $S'$ from $S''$ by following constructive steps that we next describe. The steps sometimes result in a tree that cannot be a stub of an embedding of the tree $S$, however,
any stub $S'$ associated with an embedding of $S$ such that the corresponding set $\ST$ contains $S''$
can be created by following the described steps.

The distinguished vertex of $S'$ must be a hub $v_{S \to T}$ of an embedding of $S$ that
can be transformed into $S''$ by moving at most three edges.
By Proposition~\ref{prop:move2}, there are at most $144$ choices for $v_{S\to T}$.
Once the vertex $v_{S\to T}$ is chosen,
it needs to be decided which three branches of $S''$ rooted at $v_{S\to T}$ correspond to grafts of the stub $S'$.
Suppose $S$ has at most one leaf adjacent to $v_S$,
which implies that every non-trivial branch rooted at $v_S$ is important.
Every graft of $S'$ corresponds to either a leaf (as
an important branch can be become trivial after the removal of a single edge\textemdash
note that at most three additional leaves can be created in this way), or
to one of the first four non-trivial branches rooted at $v_S$ in the order given by $\preceq_{v_S}$ (as a new non-trivial branch can be created by adding the first three edges of $Q$).
Hence, there are at most eight branches of $S''$ that could possibly be grafts in $S'$ when $v_{S\to T}$ is chosen.
Similarly, if $S$ has at most one fork rooted at $v_S$,
every graft of $S'$ corresponds to either a fork (as
an important branch can become a fork after the removal of a single edge\textemdash
again, at most three additional forks can be created in this way), or
to one of the first four major branches rooted at $v_S$ in the order given by $\preceq_{v_S}$.
Again, there are at most eight branches of $S''$ that could possibly be grafts in $S'$.

Next, fix a triple among the at most eight branches that could be the three grafts of $S'$.
Observe that the degree of $v_{S\to T}$ in $S''$ is the same as the degree of $v_S$ in $S$
unless a new branch at $v_{S\to T}$ was created by adding the first three edges on $Q$;
that is, these three edges form a branch rooted at $v_{S\to T}$ in $S''$.
In the latter case, remove the three edges of $Q$ that
have been added to get the same number of branches in the embedding as in $S'$.
The correspondence between the branches of the embedding and $S'$ different from the grafts
is given by their isomorphism to the branches of $S$ rooted at $v_S$.
Three branches of $S$ remain unmatched in this way and these can correspond in $3!=6$ ways to the grafts.
When the correspondence of these three branches and the grafts is fixed,
it is uniquely determined which edges of $S''$ need to be removed to get the stub $S'$.
For example,
if one of the branches of $S''$ has two additional edges but not two additional leaves compared to the corresponding branch of $S$, then the last of the ten cases applied (that is, three edges from $Q$ were added), and
we just remove the three edges of $Q$ to get $S'$.
Otherwise, 
the difference between the number of edges in the three branches of $S''$ chosen as grafts and the corresponding branches of $S$
determine the number of edges to be removed to get $S'$, and the correct edges to be removed are uniquely determined by
the linear orders $\preceq_v$.

We conclude that for every $k$-vertex tree $S''$, there are at most 
\[144 \cdot \tbinom{8}{3} \cdot 6\le 48\,384 \]
stubs $S'$ such that $S'$ is associated with an embedding of $S$ that the corresponding set $\ST$ contains $S''$;
the estimate follows from the fact that there are at most $144$ choices of $v_{S\to T}$,
each of which leads to at most $\binom{8}{3}$ choices of grafts and
at most six ways in which the grafts can correspond to the branches of $S$ rooted at $v_S$.

The bound on the density of $S$ in $T$ is obtained as follows.
Assign a charge of $48\,384\cdot 12=580\,608$ to each embedding of a $k$-vertex tree $S''$ in $T$ that
is not isomorphic to $S$.
Each such embedding sends $12$ units of charge to each of the at most $48\,384$ stubs $S'$ associated with an embedding of $S$ whose corresponding set $\ST$ contains $S''$.
In this way, every stub $S'$ receives at least $s_As_Bs_C$ units of charge,
where $s_A$, $s_B$ and $s_C$ are defined as above (note that
the quantities $s_A$, $s_B$ and $s_C$ are uniquely determined by the stub $S'$).
Finally, the stub $S'$ sends one unit of charge to each embedding of $S$ in $T$ whose associated stub is $S'$.
Since every embedding of $S$ in $T$ receives at least one unit of charge,
the density of $S$ in $T$ is at most $1-580\,609^{-1}\le 1-10^{-7}$.
\end{proof}

\section{Inducibility of trees with forks}
\label{sec:forks}

In this section, we analyze the inducibility of non-caterpillar trees that are not covered by Theorem \ref{thm:major}. A similar argument applies to a large class of caterpillars and so we formulate a single theorem to cover all cases.

\begin{theorem}
\label{thm:forks}
Let $S$ be a $k$-vertex tree $(k \geq 17)$ that
has a fixed vertex $v_S$ satisfying one of the following:
\begin{itemize}
\item $S$ is not a caterpillar, and $v_S$ is a hub of $S$ that is the root of at least one fork and is adjacent to at least two leaves,
\item $S$ is a caterpillar with at least four internal vertices, and $v_S$ is the root of a fork of order at least two and is adjacent to a leaf, or
\item $S$ is a caterpillar with exactly three internal vertices, and $v_S$ is the root of a fork and is adjacent to a leaf.
\end{itemize}
If $T$ is a tree with radius at least $4k$,
then $d(S,T) \leq 1-10^{-4}$.
\end{theorem}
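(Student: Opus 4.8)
The plan is to adapt the discharging scheme from the proof of Theorem~\ref{thm:major}, with the three large branches of $v_S$ replaced by the \emph{fork region} of $v_S$: the centre $c$ of a fork at $v_S$, its pendant leaves, and the leaf (or leaves) attached to $v_S$ itself, together with one pendant leaf of the far internal vertex in the three-internal-vertex caterpillar case. The point common to all three cases is that $v_S$ is the root of a fork and is adjacent to at least one leaf, that the degree of $v_S$ is at least three, and that the fork region contains at least three leaves.

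First, exactly as before, fix a vertex $v_T$ of $T$ from which two internally disjoint $(2k+1)$-vertex paths $P_1,P_2$ emanate (this uses that the radius of $T$ is at least $4k$) and a linear order $\preceq_v$ on the edges at each vertex $v$ of $T$. Given an embedding of $S$ in $T$, canonically locate the image $\hat v$ of $v_S$: when $S$ is not a caterpillar this is one of the at most $144$ hubs furnished by Proposition~\ref{prop:move2}; in the caterpillar cases one checks that $\hat v$ occupies a bounded-size set of positions relative to the centre provided by Proposition~\ref{prop:move} --- it is the middle of the three-vertex spine, hence adjacent to (or equal to) a centre, when $S$ has three internal vertices, and it lies next to an end of the spine when $S$ has at least four, since a fork of order at least two forces its centre to be a spine endpoint. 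Once $\hat v$ is fixed, the image $\hat c$ of the fork centre, the fork leaves, and the remaining leaves of the fork region are determined up to the $\preceq_v$-orderings. As in Theorem~\ref{thm:major}, form a \emph{stub} by deleting three leaves of the fork region chosen canonically via a DFS traversal directed by the $\preceq_v$, and attach to each vertex incident with a deleted leaf the notion of \emph{active edge}; an $S$-embedding is then recovered from its stub exactly by re-attaching a leaf along an active edge at each of these vertices, so the number of $S$-embeddings yielding a fixed stub equals the product of the three active-edge counts.

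The heart of the argument is to build, from each stub, a set $\ST$ of embeddings of $k$-vertex trees not isomorphic to $S$ with $|\ST|$ at least that product divided by an absolute constant. The decisive feature of a fork is that any re-attachment of the deleted leaves away from the star pattern alters the degree of $\hat c$ or of $\hat v$, and hence the degree sequence of the resulting tree; so the members of $\ST$ are obtained by re-attaching a deleted leaf as a pendant path of length two to a surviving fork leaf (turning the fork into a spider), or by moving a deleted leaf between $\hat c$ and $\hat v$, or --- when no relevant vertex carries enough active edges --- by appending the first three so-far-unused edges of the path $Q$ that leaves $\hat v$, runs through $v_T$, and continues along $P_1$ or $P_2$, mirroring the last case of Theorem~\ref{thm:major}. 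A short case analysis over the relative sizes of the active-edge counts and over the possible shapes of the fork region (notably the small fork orders $r\in\{1,2\}$) then yields the required lower bound on $|\ST|$, and every tree in $\ST$ has degree sequence different from that of $S$. The discharging accounting is then routine: each embedding of a $k$-vertex tree $S''\not\cong S$ distributes a bounded charge among the $O(1)$ stubs whose $\ST$ contains it --- the $O(1)$ coming from the bounded number of candidate $\hat v$, the bounded number of branches of $S''$ that can play the role of grafts, and the bounded number of ways the grafts correspond to the fork region --- while each stub relays at least one unit of charge to each of its $S$-embeddings; this gives $d(S,T)\le 1-c$ for an explicit constant $c\ge 10^{-4}$.

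I expect the main obstacle to be the case analysis producing $\ST$: the fork region takes several different shapes according to which of the three cases holds and whether the fork order is $1$, $2$, or larger, and in each configuration --- including the degenerate ones in which some of $\hat v$, $\hat c$ and the fork leaves have no free $T$-neighbours --- one must check that the three deleted leaves can be re-attached to create comparably many trees with the wrong degree sequence, resorting to $Q$ only when unavoidable. A secondary technical point is to make the choice of $\hat v$ canonical and to bound the number of its candidates in the caterpillar cases, where Proposition~\ref{prop:move2} does not apply directly and one must argue instead from the spine structure together with Proposition~\ref{prop:move}.
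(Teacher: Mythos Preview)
Your strategy differs substantially from the paper's. Rather than forming a $(k-3)$-vertex stub by deleting three leaves and rebuilding, the paper works directly with each embedding of $S$ and defines families $\ST_A,\ST_B,\ST_C$ (and, when $\ell\ge 2$, $\ST_D$) of non-$S$ embeddings obtained by moving only one or two edges: remove a leaf at $v_{S\to T}$ and add a leaf to a maximum-order fork ($\ST_A$); do the same into a fork of order $\ell-1$ ($\ST_B$); reverse $\ST_A$ ($\ST_C$); or delete two leaves and extend along $Q$ ($\ST_D$). Discharging is governed by ratios $\varepsilon_A,\ldots,\varepsilon_D$ defined so that each $S$-embedding receives at least $10\sum\varepsilon_X$, and the heart of the proof is a chain of product inequalities ($\varepsilon_A\varepsilon_C\ge 1/16$, $\varepsilon_B\varepsilon_C\ge 1/48$, $\varepsilon_C\varepsilon_D\ge 1/4$, \ldots) forcing one ratio to be at least $1/10$ except in the boundary situations $\alpha=0$ or $\sum_{v\in R_{\ell-1}\cup R_\ell}\beta_v=0$, each of which is handled by an ad hoc $Q$-move.

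Your stub approach is not clearly wrong, but the transplant from Theorem~\ref{thm:major} is rougher than you indicate, and the gaps are exactly where the work lies. In Theorem~\ref{thm:major} the three deleted leaves sit in three \emph{distinct} branches, so $v_A,v_B,v_C$ are distinct and the $S$-count per stub is genuinely $s_As_Bs_C$. In the fork region the attachment vertices are essentially just $\hat v$ and $\hat c$, so your ``product of three active-edge counts'' is really a binomial expression like $\binom{s_{\hat v}}{2}s_{\hat c}$, and the relevant case split is two-parameter in $(s_{\hat v},s_{\hat c})$ together with the fork order and $|R_0|$ --- not the ten-case list you can copy. In the non-caterpillar case $v_S$ may root several forks, so ``the'' fork centre $\hat c$ is not canonical; the paper handles this by summing over the sets $R_0,\ldots,R_\ell$ rather than fixing a single $c$. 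And your proposed move ``attach a pendant path of length two to a surviving fork leaf'' needs that leaf to have a further $T$-neighbour, which it need not --- this is precisely the $\sum\beta_v=0$ degeneracy that forces the paper into separate $Q$-based repairs even when $\alpha>0$. None of this kills the approach, but what you call ``a short case analysis'' is in fact the entire proof, and once you write it out its structure will look much closer to the paper's ratio-product argument than to the graft combinatorics of Theorem~\ref{thm:major}.
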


\begin{proof}
The assumptions guarantee that $v_S$ is the root of at least two non-trivial branches, and that there is at most one vertex $v'_S\neq v_S$ of $S$ such that
$S$ has an automorphism mapping the vertex $v_S$ to $v'_S$.
Let $\ell$ be the maximum order of a fork rooted at $v_S$;
since $v_S$ is the root of a fork, $\ell>0$ is well-defined.

\textbf{Notation.}
Fix a tree $T$ with radius at least $4k$.
Let $v_T$ be a vertex of $T$ such that
there exist $(2k+1)$-vertex paths $P_1$ and $P_2$ starting at $v_T$ that
are disjoint except at $v_T$ itself; such a choice is possible because the radius of $T$ is at least $4k$.
We show that the density of $S$ in $T$ is at most $1-10^{-4}$ using a discharging argument that assigns
each embedding of a $k$-vertex tree non-isomorphic to $S$ a charge of $9\,999$ units and
redistributes this charge to embeddings of $S$ so that each one receives at least one unit of charge.

Consider an embedding of $S$ in $T$ and let $v_{S\to T}$ be the vertex of $T$ corresponding to $v_S$;
if there are two valid choices, choose $v_{S\to T}$ arbitrarily among them.
Let $R_0$ be the set of leaves of the embedding of $S$ adjacent to $v_{S\to T}$ and
let $R_i$ be the neighbors of $v_{S\to T}$ that are contained in a fork of order $i$ for $i\in\{1,\ldots,\ell\}$.
Note that $R_0\not=\emptyset$ and $R_\ell\not=\emptyset$.
In addition, observe that $\ell>1$ or $|R_0|>1$;
in the last case described in the statement of the lemma,
this is because $|S| \geq 17$. 
Set $R=R_0\cup\cdots\cup R_\ell$.
Let $\alpha$ be the number of edges of $T$ incident with $v_{S\to T}$ that are not contained in $S$, and
for a vertex $v\in R$, let $\beta_v$ be the number of edges incident with $v$ that are not contained in $S$.
Finally, define $Q$ to be the unique path in $T$ between $v_{S\to T}$ and $v_T$ prolonged by $P_1$ if $P_1$ does not contain the edge from $v_T$ towards $v_{S\to T}$ and prolonged by $P_2$ otherwise.
Since $P_1$ and $P_2$ each have $2k+1$ vertices,
$Q$ has at least $k+1$ vertices not contained in the embedding of $S$.

\textbf{Definition of correspondence.}
We next define sets $\ST_A$, $\ST_B$ and $\ST_C$ of embeddings of trees non-isomorphic to $S$, and
in some cases, we also define a set $\ST_D$.
Each of the embeddings contained in $\ST_A$, $\ST_B$ and $\ST_C$
can be obtained from the embedding of $S$ by moving an edge, and
some of these sets can be empty.

Let $\ST_A$ be the set of embeddings obtained by removing a leaf adjacent to $v_{S\to T}$ and
adding a leaf to a fork of order $\ell$ rooted at $v_{S\to T}$.
Note that the number of leaves of $S$ that are adjacent to a vertex that is the root of exactly one non-trivial branch (that is, the number of leaves contained in a fork) 
is one fewer than the number of such leaves in the obtained embedding.
Hence, the trees in $\ST_A$ are not isomorphic to $S$.
Observe that
\[|\ST_A|=|R_0|\cdot\sum_{v\in R_\ell}\beta_v,\]
and let
\[\varepsilon_A=\frac{|\ST_A|}{(\ell+1)(\alpha+1)}=\frac{|R_0|\cdot\sum_{v\in R_\ell}\beta_v}{(\ell+1)(\alpha+1)}.\]

Let $\ST_B$ be the set of embeddings obtained by removing a leaf adjacent to $v_{S\to T}$ and adding a leaf to a fork of order $\ell-1$ if $\ell\ge 2$ or
adding a leaf to another leaf adjacent to $v_{S\to T}$ if $\ell=1$.
Since the number of leaves of $S$ that are adjacent to a vertex that is the root of exactly one non-trivial branch
is one fewer than the number of such leaves in the obtained embedding,
the trees in $\ST_B$ are not isomorphic to $S$.
Observe that if $\ell\not=1$, then
\[|\ST_B|=|R_0|\cdot\sum_{w\in R_{\ell-1}}\beta_w,\]
and if $\ell=1$, then
\[|\ST_B|=(|R_0|-1)\cdot\sum_{w\in R_{\ell-1}}\beta_w.\]
Finally, let
\[\varepsilon_B=\frac{|\ST_B|}{(|R_\ell|+1)\ell(\alpha+1)}\ge\frac{|R_0|\cdot\sum_{w\in R_{\ell-1}}\beta_w}{2(|R_\ell|+1)\ell(\alpha+1)};\]
the inequality holds since $\ell>1$ or $|R_0|>1$.

Next, let $\ST_C$ be the set of embeddings obtained by removing a leaf of a fork of order $\ell$ rooted at $v_{S\to T}$ and
adding a leaf adjacent to $v_{S\to T}$.
Unless $\ell=1$,
the number of leaves of $S$ that are adjacent to a vertex that is the root of exactly one non-trivial branch
is one more than the number of such leaves in the obtained embedding.
If $\ell=1$, then the number of leaves of $S$ is one less than the number of leaves in the obtained embedding.
In both cases, the trees contained in $\ST_C$ are non-isomorphic to $S$.
Observe that
\[|\ST_C|=|R_\ell|\cdot\ell\cdot\alpha,\]
and let
\[\varepsilon_C=\sum_{v\in R_\ell}\frac{\ell\cdot\alpha}{(|R_0|+1)\left(\beta_v+1+\sum_{w\in R_{\ell-1}}\beta_w\right)}.\]

If $\ell\ge 2$, then we also define $\ST_D$ to be the set of embeddings obtained by removing a leaf adjacent to $v_{S\to T}$ and
a leaf of a fork of order $\ell$ rooted at $v_{S\to T}$, and
adding the first two edges of $Q$ not contained in the embedded tree.
Since the number of leaves of $S$ is at least one more than the number of leaves in the obtained embedding,
the obtained embedding is not an embedding of $S$.
Unlike in the previous three cases,
the embeddings obtained in this way need not all be embeddings of the same tree
since one of the removed edges can be contained in $Q$ and then added back.
Observe that
\[|\ST_D|=|R_0|\cdot|R_\ell|\cdot\ell,\]
and let
\[\varepsilon_D=\sum_{v\in R_\ell}\frac{|R_0|\cdot\ell}{(\alpha+1)\left(\beta_v+1+\sum_{w\in R_{\ell-1}}\beta_w\right)}.\]

\textbf{Discharging argument.}
Given an embedding $S'$ of a $k$-vertex tree in $T$,
the number of choices of a vertex $v_{S\to T}$ in $S'$ such that
$S'$ is contained in one of the sets $\ST_A$, $\ST_B$, $\ST_C$, and $\ST_D$
for an embedding of $S$ with the vertex $v_S$ mapped to $v_{S\to T}$
is at most $144$ by Proposition~\ref{prop:move2} if $S$ is not a caterpillar.
If $S$ is a caterpillar, then the number of choices of a vertex $v_{S\to T}$ in $S'$ such that
$S'$ is contained in $\ST_A$, $\ST_B$, or $\ST_C$
for an embedding of $S$ with the vertex $v_S$ mapped to $v_{S\to T}$ is at most 4:
if $S'$ is a caterpillar then $v_{S\to T}$
must be its first, second, second-last, or last internal vertex, and
if $S'$ is not a caterpillar,
then it can only be contained in $\ST_B$ and $v_{S\to T}$ is its unique vertex with two forks.
Furthermore, 
the number of choices of a vertex $v_{S\to T}$ in $S'$ such that
$S'$ is contained in $\ST_D$ for an embedding of $S$ with the vertex $v_S$ mapped to $v_{S\to T}$ is at most $4$:
there are at most two choices of edges that could have been added as part of $Q$ and,
once these edges are chosen and removed,
$v_{S\to T}$ is either its second or second-last internal vertex (assuming the tree is a caterpillar).
We conclude that
if $S$ is a caterpillar, then the number of choices of a vertex $v_{S\to T}$ in $S'$ such that
$S'$ is contained in one of the sets $\ST_A$, $\ST_B$, $\ST_C$ and $\ST_D$
for an embedding of $S$ with the vertex $v_S$ mapped to $v_{S\to T}$
is at most $8$.

For each choice of $v_{S\to T}$,
the embedding $S'$ distributes $10$ units of its charge equally to the embeddings of $S$ with $v_{S\to T}$ such that
$S'$ is in the corresponding set $\ST_A$, $10$ units of its charge equally to the embeddings such that $S'$ is in the corresponding set $\ST_B$, $10$ units of its charge equally to the embeddings such that $S'$ is in the corresponding set $\ST_C$, and,
if $\ell\ge 2$,
an additional $10$ units of its charge equally to the embeddings such that $S'$ is in the corresponding set $\ST_D$.
In this way, the embedding $S'$ distributes at most $8 \cdot 40 = 320$ charge if $S$ is a caterpillar, and at most $144 \cdot 40 = 5\,760$ units of charge if it is not.
We remark that there will be additional charge distributed by $S'$ by rules described later in the proof.
Each embedding of $S$ receives at least $10(\varepsilon_A+\varepsilon_B+\varepsilon_C)$ units of charge and,
if $\ell\ge 2$,
at least $10(\varepsilon_A+\varepsilon_B+\varepsilon_C+\varepsilon_D)$ units of charge.
In particular, the considered embedding receives at least one unit of charge
unless $\varepsilon_A$, $\varepsilon_B$, $\varepsilon_C$, and $\varepsilon_D$ are all less than $1/10$.

We next show that one of $\varepsilon_A$, $\varepsilon_B$ and $\varepsilon_C$ is at least $1/10$
unless $\alpha=0$ or $\sum_{w\in R_{\ell-1}}\beta_w=\sum_{v\in R_\ell}\beta_v=0$.
Suppose that $\alpha\not=0$.
Let $B$ be the maximum value of $\beta_v$ for $v\in R_\ell$.
If $B>\sum_{w\in R_{\ell-1}}\beta_w$, then 
\begin{align*}
\varepsilon_A\varepsilon_C & =\frac{|R_0|\cdot\sum_{v\in R_\ell}\beta_v}{(\ell+1)(\alpha+1)}\cdot
\sum_{v\in R_\ell}\frac{\ell\cdot\alpha}{(|R_0|+1)\left(\beta_v+1+\sum_{w\in R_{\ell-1}}\beta_w\right)}\\
& \ge \frac{|R_0|\cdot\sum_{v\in R_\ell}\beta_v}{4\ell\alpha}\cdot
\frac{|R_\ell|\cdot\ell\cdot\alpha}{2(|R_0|+1)B}\ge \frac{|R_0|}{8(|R_0|+1)}\ge\frac{1}{16}.
\end{align*}
Hence, $\varepsilon_A$ or $\varepsilon_C$ is at least $1/10$.
If $B\le\sum_{w\in R_{\ell-1}}\beta_w$ and $\sum_{w\in R_{\ell-1}}\beta_w\not=0$, then 
\begin{align*}
\varepsilon_B\varepsilon_C & \geq \frac{|R_0|\cdot\sum_{w\in R_{\ell-1}}\beta_w}{2(|R_\ell|+1)\ell(\alpha+1)}\cdot
                               \sum_{v\in R_\ell}\frac{\ell\cdot\alpha}{(|R_0|+1)\left(\beta_v+1+\sum_{w\in R_{\ell-1}}\beta_w\right)}\\
                           & \ge \frac{|R_0|\cdot\sum_{w\in R_{\ell-1}}\beta_w}{8|R_\ell|\ell\alpha}\cdot
			         \sum_{v\in R_\ell}\frac{\ell\cdot\alpha}{3(|R_0|+1)\sum_{w\in R_{\ell-1}}\beta_w}\\
                           & = \frac{|R_0|}{24(|R_0|+1)}\ge\frac{1}{48}.
\end{align*}
Hence, $\varepsilon_B$ or $\varepsilon_C$ is at least $1/10$.
We conclude that if $\alpha\not=0$,
then one of $\varepsilon_A$, $\varepsilon_B$ and $\varepsilon_C$ is at least $1/10$
unless $\sum_{w\in R_{\ell-1}}\beta_w=0$ and $\sum_{v\in R_\ell}\beta_v=0$.
So, we need to analyze the cases
when $\alpha=0$ or when $\sum_{w\in R_{\ell-1}}\beta_w=\sum_{v\in R_\ell}\beta_v=0$.

\textbf{Analysis of non-caterpillars.}
Suppose that $S$ is not a caterpillar and $\alpha=0$.
Let $S'$ be obtained from the embedding of $S$ by removing any two leaves adjacent to $v_{S\to T}$ and
adding the first two edges on $Q$ not contained in the embedding.
Since $S'$ has at least one less leaf than $S$, it is not isomorphic to $S$.
The embedding $S'$ sends one unit of charge to the considered embedding of $S$.
Note that the embedding $S'$ sends by this rule at most $2\cdot144$ units of charge
in addition to the charge sent earlier: when $S'$ is fixed, there are at most two choices of edges that could have been added as part of $Q$, and at most $144$ choices of $v_{S\to T}$ by Proposition~\ref{prop:move2}.
The leaves adjacent to $v_{S\to T}$ that were removed are uniquely determined since $\alpha=0$.

Suppose that $S$ is not a caterpillar, $\alpha>0$ and $\sum_{w\in R_{\ell-1}}\beta_w=\sum_{v\in R_\ell}\beta_v=0$.
If $\ell\ge 2$, then
\[\varepsilon_C=\frac{|R_\ell|\ell\alpha}{|R_0|+1}\qquad\mbox{and}\qquad
\varepsilon_D=\frac{|R_\ell|\ell|R_0|}{\alpha+1}.\]
It follows that $\varepsilon_C\varepsilon_D\ge 1/4$; that is, $\varepsilon_C$ or $\varepsilon_D$ is at least $1/10$.
If $\ell=1$ and $|R_1|\ge 2$, then 
let $S'$ be obtained from the embedding of $S$ by removing a leaf from two forks rooted at $v_{S\to T}$ and
adding the first two edges of $Q$.
Since the embedding $S'$ has fewer leaves contained in forks, $S'$ is not isomorphic to $S$.
The embedding $S'$ sends one unit of charge to the considered embedding of $S$.
Each embedding $S'$ sends in this way at most $2\cdot144$ units of charge in addition to the charge sent earlier: when $S'$ is fixed, there are at most two choices of edges that could have been added as part of $Q$, and at most $144$ choices of $v_{S\to T}$ by Proposition~\ref{prop:move2}.
The leaves adjacent to $v_{S\to T}$ to be changed to a fork are uniquely determined and so are the edges to be added
since $\sum_{w\in R_{\ell-1}}\beta_w=\sum_{v\in R_\ell}\beta_v=0$.

If $\ell=|R_1|=1$, then let $e$ be the edge incident with the leaf of the fork rooted at $v_{S\to T}$.
If $v_{S\to T}$ has a neighbor $w$ in $T$ that is not contained in $S$ and that has degree at least two in $T$, then 
remove the edge $e$ and add the edge $v_{S\to T}w$ to obtain an embedding $S'$.
The embedding $S'$ has more leaves than $S$ and so is not isomorphic to $S$.
The embedding $S'$ sends one unit of charge to the considered embedding of $S$.
Each embedding $S'$ sends in this way at most $2\cdot 144$ units of charge in addition to the charge sent earlier as
there are at most two leaves adjacent to $v_{S\to T}$ that can be changed to a fork with the unique edges to be added (as
$\sum_{w\in R_{\ell-1}}\beta_w=\sum_{v\in R_\ell}\beta_v=0$).
Hence, we can assume that all neighbors of $v_{S\to T}$ in $T$ are leaves except its neighbors that are
contained in the non-trivial branches of $S$.

If $e$ is not contained in $Q$,
then let $S'$ be the embedding obtained from $S$ by removing $e$ and adding the first edge of $Q$ not contained in $S$, and
let $S''$ be the embedding obtained from $S$ by removing the fork containing $e$ and adding the first two edges of $Q$ not contained in $S$.
Observe that $S'$ or $S''$ is not isomorphic to $S$ since at least one of them has a different number of leaves from $S$.
The embedding that is not isomorphic sends one unit of charge to $S$ and
each embedding sends at most $4\cdot 144$ units of charge in this way (it can appear in the role of $S'$ and $S''$, there are at most two choices of edges that could have been added as part of $Q$, and there are at most $144$ choices of $v_{S\to T}$, each determining the embedding $S$ uniquely).

If $e$ is contained in $Q$, then
let $\ST_1$ be the set of embeddings obtained by removing a leaf adjacent to $v_{S\to T}$ and
adding the edge of $Q$ following the edge $e$;
note that $|\ST_1|=|R_0|$.
Let $\ST_2$ be the set of embeddings obtained by removing the edge $e$ and
adding an edge incident with $v_{S\to T}$ not contained in $S$;
note that $|\ST_2|=\alpha$.
Since the embeddings in $\ST_1$ and $\ST_2$ have different numbers of leaves than $S$,
they are not isomorphic to $S$.
Each embedding in $\ST_1$ distributes one unit of charge equally among all $\alpha+1$ embeddings of $S$ that can be obtained in this way, and
each embedding in $\ST_2$ distributes one unit of charge equally among all $|R_0|+1$ embeddings of $S$ that can be obtained in this way (note that $v_{S\to T}$ is uniquely determined as
the vertex in the embedding with degree greater than 2 that is closest to the added edges of $Q$, and
the fork of an embedding of $S$ is created only by adding an edge to a leaf adjacent to $v_{S\to T}$ whose degree in $T$ is $2$).
We conclude that the embedding of $S$ receives at least
\[\frac{|R_0|}{\alpha+1}+\frac{\alpha}{|R_0|+1}\ge \frac{1}{2}\left(\frac{|R_0|}{\alpha}+\frac{\alpha}{|R_0|}\right)\ge 1\]
units of charge.

\textbf{Analysis of caterpillars.}
We next analyze the case when $S$ is a caterpillar.
If $\alpha=0$ and $|R_0|\ge 2$, then 
let $S'$ be obtained from the embedding of $S$ by removing any two leaves adjacent to $v_{S\to T}$ and
adding the first two edges on $Q$ not contained in the embedding.
Since $S'$ has fewer leaves than $S$, $S'$ is not isomorphic to $S$.
The embedding $S'$ sends one unit of charge to the considered embedding of $S$.
Note that in this way the embedding $S'$
sends at most $2\cdot 2$ units of charge in addition to the charge sent earlier:
there are at most two choices of edges that could have been added as part of $Q$ and,
once these edges are chosen and removed,
the vertex $v_{S\to T}$ is either the second or second-last internal vertex of the resulting caterpillar.

If $\alpha=0$, $|R_0|=1$ and $\ell\ge 2$, then 
we derive along the lines used in the general case that
$\varepsilon_A\varepsilon_D\ge\frac{|R_0|^2}{4(\alpha+1)^2}=\frac{1}{4}$ or
$\varepsilon_B\varepsilon_D\ge\frac{|R_0|^2}{12(\alpha+1)^2}=\frac{1}{12}$
unless $\sum_{w\in R_{\ell-1}}\beta_w=\sum_{v\in R_\ell}\beta_v=0$.
However, if $\sum_{w\in R_{\ell-1}}\beta_w=\sum_{v\in R_\ell}\beta_v=0$,
then $\varepsilon_D=\frac{|R_0|\cdot|R_\ell|\cdot\ell}{\alpha+1}\ge 2$.

If $\alpha=0$ and $|R_0|=\ell=1$,
then $\varepsilon_A\ge 1/2$ unless $\sum_{v\in R_\ell}\beta_v=0$.
If $\alpha=0$, $|R_0|=\ell=1$, and $\sum_{v\in R_\ell}\beta_v=0$,
and more generally whenever $\ell=1$ and $\sum_{v\in R_\ell}\beta_v=0$,
then we are in the third case from the statement of the lemma, and $|R_\ell|=2$.
In other words, $S$ is a star with two different edges subdivided.
Consider $S'$ obtained from the embedding of $S$ by removing the edges that are incident with the leaves of the two forks of $S$ and
adding the first two edges on $Q$ not contained in the embedding.
Observe that $S'$ is not isomorphic to $S$.
The embedding $S'$ sends one unit of charge to the considered embedding of $S$.
Note that in this way the embedding $S'$
sends at most one unit of charge in addition to the charge sent earlier;
the edges of $S'$ that were added as a part of $Q$ are the unique edges whose removal creates a star,
the vertex $v_{S\to T}$ is the internal vertex of this star,
and the remaining two edges of the embedding of $S$ are uniquely determined since $\sum_{v\in R_\ell}\beta_v=0$.

The final case to consider is when $\alpha>0$, $\sum_{w\in R_{\ell-1}}\beta_w=\sum_{v\in R_\ell}\beta_v=0$, and $\ell\ge 2$ (note that
the case $\ell=1$ is covered in the previous paragraph).
As in the non-caterpillar case, it follows that $\varepsilon_C\varepsilon_D\ge \ell^2 |R_\ell|^2/4 \ge 1/4$; that is, $\varepsilon_C$ or $\varepsilon_D$ is at least $1/10$.

\textbf{Conclusion.}
According to the rules set above, each embedding of a tree non-isomorphic to $S$ distributes
at most $320+4+1=325$ units of charge if $S$ is a caterpillar, and at most $5\,760+10\cdot 144+2=7\,202$ units of charge if it is not.
Thus, the density of $S$ in $T$ is at most $1-7203^{-1} \leq 1-10^{-4}$.
\end{proof}

\section{Inducibility of caterpillars}
\label{sec:cater}

In this section, we complete the analysis of the inducibility of caterpillars.
We start with caterpillars whose second or second-last internal vertex
is the root of a fork of order 1 and is adjacent to a leaf.

\begin{lemma}
\label{lem:cater2}
Let $S$ be a non-path caterpillar with $k\ge 10$ vertices that
has at least four internal vertices and
has a fixed vertex $v_S$ that is the root of a fork of order $1$ and is adjacent to a leaf.
If $T$ is a tree with radius at least $4k$,
then $d(S,T) \leq 1-10^{-3}$.
\end{lemma}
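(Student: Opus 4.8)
The plan is to follow the discharging template established in Theorems~\ref{thm:major} and~\ref{thm:forks}: fix the host tree $T$ with radius at least $4k$, pick a vertex $v_T$ with two internally disjoint $(2k+1)$-vertex paths $P_1,P_2$ attached, assign each embedding of a $k$-vertex tree non-isomorphic to $S$ a fixed charge (here on the order of $1000$ units), and redistribute so that every embedding of $S$ receives at least one unit. For an embedding of $S$ in $T$ let $v_{S\to T}$ be the vertex corresponding to $v_S$ (at most two choices, and since $S$ is a caterpillar $v_{S\to T}$ is forced to be the second or second-last internal vertex of the embedding, so only a bounded number of embeddings of $S$ send charge to a given non-$S$ embedding — this replaces the use of Proposition~\ref{prop:move2}). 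Write $e_0$ for the edge from $v_{S\to T}$ to the leaf of its fork (order $1$), let $\alpha$ be the number of edges at $v_{S\to T}$ not in the embedding, let $\beta$ be the number of edges at the fork-leaf not in the embedding, and let $|R_0|\ge 1$ be the number of bare leaves adjacent to $v_{S\to T}$. As before, $Q$ denotes the path from $v_{S\to T}$ through $v_T$ prolonged by whichever of $P_1,P_2$ avoids the first edge, giving at least $k+1$ fresh vertices.

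The main move-one-edge families are: $\ST_A$, obtained by deleting a bare leaf at $v_{S\to T}$ and adding a leaf onto the fork-leaf (so the fork has order $2$); $\ST_B$, obtained by deleting the fork-leaf's pendant edge and adding a bare leaf at $v_{S\to T}$; and, because $\ell=1$ here, these change the count of leaves-in-forks and so land on trees non-isomorphic to $S$. One computes $|\ST_A|=|R_0|\cdot\beta$ and $|\ST_B|=\alpha$, and sets $\varepsilon_A=|\ST_A|/\bigl(2(\alpha+1)\bigr)$, $\varepsilon_B=|\ST_B|/\bigl((|R_0|+1)(\beta+1)\bigr)$ (denominators counting the reverse ambiguity). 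When $\alpha\ge 1$ and $\beta\ge 1$ the product $\varepsilon_A\varepsilon_B$ is bounded below by an absolute constant, so one of them is at least $1/10$ and the embedding is happy. The degenerate cases are $\alpha=0$ and $\beta=0$, handled exactly as in Theorem~\ref{thm:forks}: if $\alpha=0$ and $|R_0|\ge 2$, delete two bare leaves and splice in the first two edges of $Q$ (fewer leaves, so non-isomorphic); if $\alpha=0$ and $|R_0|=1$, delete the fork-leaf edge and one bare leaf and add two edges of $Q$, or use a neighbour of $v_{S\to T}$ in $T$ of degree $\ge 2$ to lengthen the fork, as in the $\ell=|R_1|=1$ analysis of Theorem~\ref{thm:forks}; if $\beta=0$ (fork-leaf has degree $2$ in $T$, so its pendant edge may lie on $Q$), split into the sub-cases "$e_0\in Q$'' and "$e_0\notin Q$'' and use the same pair of rules (one family deleting a bare leaf and extending along $Q$ past $e_0$, the other deleting $e_0$ and adding an edge at $v_{S\to T}$), giving receipt at least $\tfrac{|R_0|}{\alpha+1}+\tfrac{\alpha}{|R_0|+1}\ge 1$.

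In every rule the extra charge a fixed non-$S$ embedding $S'$ can send out is bounded by a small constant: at most two choices for which edge(s) of $Q$ were added, at most two choices for $v_{S\to T}$ (second or second-last internal vertex), and at most two choices of the leaf/fork-leaf swapped, since $S$ is a caterpillar — no appeal to Proposition~\ref{prop:move2} is needed and the constants stay well under $1000$. Summing, each non-$S$ embedding distributes at most $C$ units for an absolute $C$ comfortably below $1000$, so $d(S,T)\le 1-(C+1)^{-1}\le 1-10^{-3}$. I expect the main obstacle to be bookkeeping in the $\beta=0$ / $e_0\in Q$ subcase — making sure that $v_{S\to T}$ and the removed edges are genuinely recoverable from $S'$ so the reverse-ambiguity constants are honest — but this is structurally identical to the corresponding passage in the proof of Theorem~\ref{thm:forks} and should go through verbatim with $\ell=1$.
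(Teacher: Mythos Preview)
Your plan adapts the move-one-edge template of Theorem~\ref{thm:forks}, whereas the paper takes a different route: it strips $v_S$ of its fork \emph{and all its bare leaves} to form a stub $S_0$, then for each stub counts the numbers $N,N',N'',N'''$ of extensions to $S$ and to three fixed non-isomorphic trees and proves $N\le 54(N'+N''+N'''+N_Q)$. The reason the paper does this rather than follow Theorem~\ref{thm:forks} shows up exactly in your $\varepsilon_B$. You claim the reverse ambiguity for $\ST_B$ is $(|R_0|+1)(\beta+1)$, but after you delete the fork-leaf edge and add a new bare leaf, the resulting embedding has $|R_0|+2$ leaves at $v_{S\to T}$, and to recover an $S$-embedding you may promote \emph{any} of them to a fork centre, not just the original one. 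The honest denominator therefore contains $\sum_{u\in R_0}\beta_u$ (this is precisely the $\ell=1$ specialisation of $\varepsilon_C$ in Theorem~\ref{thm:forks}). Concretely, take $|R_0|=1$, $\alpha=10$, $\beta=1$, and let the single bare leaf $u_1$ have $\beta_{u_1}=10^3$ extra edges in $T$: then $\varepsilon_A\approx 1/22$ while the corrected $\varepsilon_B\approx 10/2004$, so the $S$-embedding whose fork centre is $w$ (not $u_1$) receives well under one unit of charge, yet $\alpha,\beta\ge 1$ so none of your degenerate branches fires.

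Theorem~\ref{thm:forks} handles the large $\sum_u\beta_u$ regime with its own $\ST_B$ (move one bare leaf onto another), but that operation needs $|R_0|\ge 2$, which the hypotheses of Lemma~\ref{lem:cater2} do not guarantee. The stub approach sidesteps the problem entirely: because the stub forgets which neighbour of $v_{S\to T}$ was the fork centre, a large $d_{u_1}$ contributes symmetrically to both $N=\sum_i d_i\binom{D-1}{\ell}$ and $N''=\sum_i\binom{d_i}{2}\binom{D-1}{\ell-1}$, and the quadratic growth of $N''$ controls the ratio. Your outline could be repaired by moving to this stub-level bookkeeping, but the $\varepsilon_A\varepsilon_B$ bound as written does not go through.
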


\begin{proof}
Let $\ell > 0$ be the number of leaves adjacent to $v_S$.
Since $S$ is a caterpillar with at least four internal vertices, $v_S$ is the root of exactly one fork;
the order of this fork is $1$ by the assumption of the lemma.

Let $v_T$ be a vertex of $T$ such that
there exist $(2k+1)$-vertex paths $P_1$ and $P_2$ starting at $v_T$ that
are disjoint except at $v_T$ itself; such a choice
is possible because the radius of $T$ is at least $4k$.

\begin{figure}
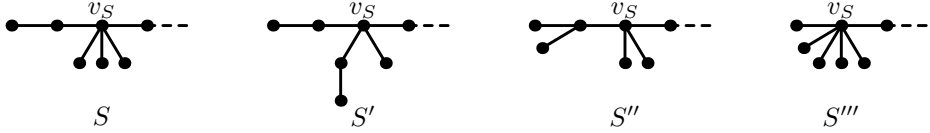

\begin{center}
\epsfbox{tprofile-3.mps}
\hskip 8mm
\epsfbox{tprofile-4.mps}
\hskip 8mm
\epsfbox{tprofile-5.mps}
\hskip 8mm
\epsfbox{tprofile-6.mps}
\end{center}
\caption{The trees $S$, $S'$, $S''$ and $S'''$ used in the proof of Lemma~\ref{lem:cater2} when $\ell=3$.}
\label{fig:cater2}
\end{figure}

We next define trees $S_0$, $S'$, $S''$ and $S'''$ (see Figure~\ref{fig:cater2}).
Let $S_0$ be the tree obtained from $S$ by removing the fork and all leaves adjacent to $v_S$.
If $\ell\ge 2$, then let $S'$ be the tree obtained from $S$ by removing a leaf adjacent to $v_S$ and
turning another leaf adjacent to $v_S$ into a fork of order $1$; if $\ell=1$, then $S'$ is not defined.
Let $S''$ be the tree obtained from $S$ by removing a leaf adjacent to $v_S$ and adding a leaf to the fork rooted at $v_S$. 
Finally, let $S'''$ be the tree obtained from $S$ be removing the leaf of the fork rooted at $v_S$ and adding a leaf adjacent to $v_S$.
The trees $S$, $S'$ (if defined), $S''$, and $S'''$ are mutually non-isomorphic, since 
$S'$ is the only one that is not a caterpillar, 
$S'''$ has fewer internal vertices than $S$ and $S''$, and
the numbers of leaves adjacent to the first and last internal vertices of $S$ and $S''$ differ.

In this proof, a \emph{stub} is an embedding of $S_0$ with a distinguished vertex $v_{S\to T}$ such that the embedding of $S_0$ can be extended to an embedding of $S$ with $v_{S\to T}$ corresponding to $v_S$.
Let $Q$ be the unique path in $T$ between the vertices $v_{S\to T}$ and $v_T$
prolonged by $P_1$ if $P_1$ does not contain the edge from $v_T$ towards $v_{S\to T}$ and prolonged by $P_2$ otherwise.
Let $D \ge \ell +1$ be the degree of $v_{S\to T}$ in $T$ minus $1$ and
let $d_1,\ldots,d_D$ be the degrees of its neighbors not contained in the embedding of $S_0$ minus $1$.
The numbers of ways that the embedding of $S_0$ can be extended (with $v_{S\to T}$ corresponding to $v_S$) to an embedding of $S$, $S'$, $S''$, and $S'''$ are
\[\sum_{1\le i\le D} d_i\binom{D-1}{\ell},
  \sum_{1\le i<j\le D} d_id_j\binom{D-2}{\ell-2},
  \sum_{1\le i\le D} \binom{d_i}{2}\binom{D-1}{\ell-1},\mbox{ and }
  \binom{D}{\ell+2},\]
respectively.
Let $N$, $N'$, $N''$ and $N'''$ be these numbers. We claim that $N\le 54(N'+N''+N'''+N_Q)$,
where $N_Q$ is the number of extensions of $S_0$ to an embedding of a tree $S_Q$ that is defined later.
Note that $N_Q>0$ only if $\sum_{1\le i\le D} d_i=1$ or $\ell=d_1=d_2=1$; 
we set $N_Q=0$ otherwise.

The following three paragraphs concern the case when $\sum_{1\le i\le D} d_i=1$;
note that $N'=N''=0$ in this case.
Fix an embedding of $S$ and denote the internal vertices of the embedding by $v_1, \dots, v_m$ so that $v_2 = v_S$.
Consider the first edge of $Q$ not contained in this embedding.
If this edge is incident with $v_m$,
then let $S_Q$ be the embedding obtained from $S$ by removing one of the leaves adjacent to $v_{S\to T}$ and
adding the first unused edge of $Q$.
The sum of the degrees of the first and last internal vertices of $S_Q$ is greater than that of $S$,
so $S_Q$ is not isomorphic to $S$.
Furthermore, the number $N_Q$ of ways that the embedding of $S_0$
can be extended to an embedding of $S_Q$ (with the added edge of $Q$ fixed) is $\binom{D-1}{\ell-1}$.

Now suppose that the first edge of $Q$ not contained in the embedding of $S$ is incident with a vertex other than $v_m$;
note that this edge is not incident with $v_2$ because $\sum_{1\le i\le D} d_i=1$.
Let $S_Q$ be the embedding obtained from $S$ by removing the leaf of the fork rooted at $v_{S\to T}$ and one of the leaves adjacent to $v_{S\to T}$, and
then adding the first two unused edges of $Q$.
The resulting embedding is not isomorphic to $S$:
if the first unused edge is incident with one of $v_3,\dots,v_{m-1}$ or their adjacent leaves, then $S_Q$ is not a caterpillar, and
if the first unused edge is incident with a leaf of $v_1$ or $v_m$, then $S_Q$ has more internal vertices than $S$.
Again, the number $N_Q$ of ways that the embedding of $S_0$
can be extended to an embedding of $S_Q$ (with the edge $v_1 v_2$ and the added edges of $Q$ fixed) is $\binom{D-1}{\ell-1}$.

It follows that
\begin{align*}
N & =\binom{D-1}{\ell} =\binom{D-1}{\ell-1} \frac{D-\ell}{\ell}, \\
N''' & =\binom{D}{\ell+2} =\binom{D-1}{\ell-1} \frac{D(D-\ell)(D-\ell-1)}{(\ell+2)(\ell+1)\ell}, \text{ and}\\
N_Q &= \binom{D-1}{\ell-1}.
\end{align*}
If $D\le 2\ell$, then $N\le N_Q$.
If $D\ge 2\ell+1$, then $D - \ell - 1 \geq \ell \geq \frac{1}{2}(\ell + 1)$ and $D \geq \ell+ 2$, so $N\le 2N'''$.
Therefore, regardless of the relationship between $D$ and $\ell$, we have $N \leq 2 ( N''' + N_Q)$.

In the rest of the proof, we analyze the case when $\sum_{1\le i\le D} d_i\ge 2$.
If $D=\ell+1$, then the numbers of ways that the embedding of $S_0$
can be extended to an embedding of $S$, $S'$ and $S''$ (note that $N'''=0$) are
\[N=\sum_{1\le i\le D} d_i,\quad
  N'=\sum_{1\le i<j\le D}(\ell-1) d_id_j,\mbox{ and }
  N''=\sum_{1\le i\le D}\ell \binom{d_i}{2},\]
respectively.
If $\ell\ge 2$, then $N\le 2N'$ unless only one of the $d_i$ is non-zero, in which case $N\le N''$.
If $\ell=1$, then $N=d_1+d_2$, $N'=0$ and $N''=\binom{d_1}{2}+\binom{d_2}{2}$,
in which case $N\le 2N''+2\le 4N''$ unless $d_1=d_2=1$.
Finally, if $\ell=d_1=d_2=1$, then consider the embedding $S_Q$ defined in the same way as in the case $\sum_{1\le i\le D} d_i=1$
unless the first edge of $Q$ not contained in the embedding of $S$ is incident with the leaf adjacent to $v_{S\to T}$;
note that the embedding $S_Q$ is well-defined as 
the first edge of $Q$ not contained in the embedding of $S$ cannot be incident with $v_2$ as $D=\ell+1$.
If the first edge of $Q$ not contained in the embedding of $S$ is incident with the leaf adjacent to $v_{S\to T}$, then remove the fork rooted at $v_{S\to T}$ and add the first two edges of $Q$ not contained in $S$;
the resulting embedding $S_Q$ is a caterpillar with diameter greater than that of $S$ and so is non-isomorphic to $S$
(note that the embedding $S_Q$ is the same for both embeddings of $S$ that can be obtained from the same stub).
In all cases describe above,
the embedding $S_Q$ is uniquely determined by the embedding of the stub, so $N_Q=1$, 
which implies that $N=2\le 2N_Q$.

Next assume that $D\ge\ell+2$.
If $\ell\ge 2$, then 
\begin{align*}
N & \le 2\binom{D-2}{\ell-2}\sum_{1\le i\le D} d_i\frac{D(D-\ell)}{\ell^2}, \\
N' & \ge \binom{D-2}{\ell-2}\sum_{1\le i<j\le D} d_id_j, \\
N'' & \ge \binom{D-2}{\ell-2}\sum_{1\le i\le D} \binom{d_i}{2},\text{ and} \\
N''' & \ge \frac{1}{8}\binom{D-2}{\ell-2}\frac{D^2(D-\ell)^2}{\ell^4}
\end{align*}
Hence, 
\[N'+N''\ge \frac{1}{4}\binom{D-2}{\ell-2}\left(\sum_{1\le i\le D}d_i\right)^2,\]
which implies that $N \le 8(N'+N''+N''')$ by the AM-GM inequality.

If $\ell=1$, then 
\begin{align*}
N & \le \sum_{1\le i\le D}Dd_i
    \le D^2+\sum_{\substack{1\le i\le D \\ d_i \geq 2}}D(d_i-1)
    \le D^2+D^3+\sum_{\substack{1\le i\le D \\ d_i \geq 2}} (d_i-1)^2, \\
N'' & =\sum_{1\le i\le D} \binom{d_i}{2}\ge \sum_{\substack{1\le i\le D \\ d_i \geq 2}} \frac{(d_i-1)^2}{2},\mbox{ and } \\
N''' & =\binom{D}{3}\ge \frac{D^3}{27}.
\end{align*}
It follows that
\begin{align*}
N \leq D^2+D^3+\sum_{\substack{1\le i\le D \\ d_i \geq 2}}(d_i-1)^2 \leq 54(N'' + N''').
\end{align*}

In all cases, we have proved that $N\le 54(N'+N''+N'''+N_Q)$.
Each embedding of a $k$-vertex tree non-isomorphic to $S$ sends $72$ charge to each stub that it extends.
In this way, each stub receives at least $54 (N' + N'' + N''' + N_Q) \ge N $ charge,
which it can then distribute to its $N$ extensions into embeddings of $S$.

To complete the discharging argument,
it remains to bound the total amount of charge that each embedding of a $k$-vertex tree non-isomorphic to $S$ sends.
If the embedding is isomorphic to $S'$, then $v_{S\to T}$ is the unique vertex that is the root of two forks.
If the embedding is isomorphic to $S''$, then $v_{S\to T}$ is either the second or second-last internal vertex.
If the embedding is isomorphic to $S'''$, then $v_{S\to T}$ is either the first or last internal vertex.
Finally, if the embedding is isomorphic to $S_Q$,
then there are at most two choices of edges that could have been added as part of $Q$, and
once these edges are chosen and removed,
$v_{S\to T}$ is either the first, second, second-last, or last internal vertex of the resulting caterpillar.
When the edge(s) added from $Q$ are removed from the embedding and $v_{S\to T}$ is chosen,
the edges of the stub can be recovered by removing the forks and leaves rooted at $v_{S\to T}$ from the embedding.
Hence, the embedding sends at most $54 \cdot (\max\{1,2,2\} + 2 \cdot 4) \le 999$ charge, and
the density of $S$ in $T$ is at most $1 - 10^{-3}$.
\end{proof}

The next lemma deals with caterpillars $S$ that are not covered by Theorem~\ref{thm:forks} and Lemma~\ref{lem:cater2}.

\begin{lemma}
\label{lem:cater3+4}
Let $S$ be a caterpillar with $k\ge 10$ vertices that is not a path such that
the path $v_1,\dots,v_m$ formed by its internal vertices satisfies either $m=2$, or $m\ge 3$ and
the degrees of $v_2$ and $v_{m-1}$ equal $2$.
If $T$ is a tree with radius at least $4k$, then $d(S,T) \leq 1-10^{-3}$.
\end{lemma}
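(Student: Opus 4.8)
The plan is to mimic the stub-and-discharging strategy used in Lemma~\ref{lem:cater2}, but with a simpler family of replacement trees, since here the distinguished vertex $v_{S\to T}$ will be $v_1$ (the first internal vertex), whose only non-trivial branch is the path of internal vertices. Concretely, let $S_0$ be obtained from $S$ by deleting all leaves adjacent to $v_1$, and define a \emph{stub} as an embedding of $S_0$ in $T$ together with a choice of a vertex $v_{S\to T}$ corresponding to $v_1$; as in Lemma~\ref{lem:cater2}, let $D$ be the degree of $v_{S\to T}$ in $T$ minus one and $d_1,\dots,d_D$ the degrees (minus one) of its neighbors outside the embedding of $S_0$. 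If $\ell$ denotes the number of leaves of $S$ adjacent to $v_1$, then the number $N$ of ways to extend a stub to an embedding of $S$ is $\sum_i d_i\binom{D-1}{\ell}$ when $m\ge 3$ (we must avoid reusing the edge toward $v_2$ and choose $\ell$ further neighbors from the remaining $D$ edges, one of which is subdivided into the edge $v_1v_2$, contributing the factor $d_i$; the $m=2$ case is handled by a symmetric bookkeeping, treating $v_2$'s leaves the same way).

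The replacement embeddings will be: $S''$, obtained by removing one leaf adjacent to $v_{S\to T}$ and attaching it to another leaf adjacent to $v_{S\to T}$ (creating a fork of order $1$ at $v_1$, hence not a caterpillar of the required shape — its second internal vertex has degree $>2$, so $S''\not\cong S$); $S'''$, obtained by removing one leaf adjacent to $v_{S\to T}$ and appending it along the path at $v_2$ (raising the degree of $v_2$, so again $S'''\not\cong S$ by the $\deg v_2=2$ hypothesis); and an embedding $S_Q$ built using the long path $Q$ (defined exactly as in the earlier proofs via the auxiliary paths $P_1,P_2$) which is used only in the degenerate regime $\sum_i d_i$ small, where $N''$ and $N'''$ vanish. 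Counting extensions, $N''=\sum_i\binom{d_i}{2}\binom{D-1}{\ell-1}$ and $N'''=\sum_i d_i\binom{D-1}{\ell+1}$ (with suitable conventions), and one checks by splitting into the cases $D\le c\ell$ versus $D>c\ell$ for an absolute constant $c$, together with AM--GM on the $d_i$'s, that $N\le C(N''+N'''+N_Q)$ for an explicit constant $C$ — the arithmetic is essentially the same as the $\ell=1$ computation in Lemma~\ref{lem:cater2}, just carried out for general $\ell$.

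The discharging is then routine: every embedding of a $k$-vertex tree non-isomorphic to $S$ carries a charge of $(C+1)$ times a small constant, sends an equal share to each stub it extends (each stub being recoverable from such an embedding by deleting the forks/leaves at $v_{S\to T}$, and from an $S_Q$-type embedding after first removing the $\le 2$ edges of $Q$), and each stub then redistributes to its $N$ extensions to copies of $S$. Bounding the number of stubs that a fixed embedding can feed requires bounding the number of candidate vertices $v_{S\to T}$: since all trees involved are caterpillars (or differ from one by a single fork), $v_{S\to T}$ must be among the first few or last few internal vertices, or — after removing the $Q$-edges — among a bounded set, so the count is $O(1)$, exactly as in the caterpillar cases of Theorem~\ref{thm:forks} and Lemma~\ref{lem:cater2}. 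Summing the charge sent by each non-isomorphic embedding gives an absolute bound, hence $d(S,T)\le 1-10^{-3}$.

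The main obstacle I anticipate is the same one that made Lemma~\ref{lem:cater2} delicate: handling the degenerate cases where $\sum_i d_i$ is very small (so that the "natural" replacements $S''$ and $S'''$ do not exist in enough copies) and where $D$ is small relative to $\ell$ (so that $N'''$ is small). In those regimes one is forced to route charge through the path $Q$, and one must check that the resulting tree $S_Q$ is genuinely non-isomorphic to $S$ in every sub-case — this depends on case analysis of where the first unused edge of $Q$ attaches (along the spine, producing extra internal vertices or breaking the caterpillar condition at an interior vertex, versus at an endpoint leaf). The hypothesis $m\ge 3$ with $\deg v_2=\deg v_{m-1}=2$, and the hypothesis that $S$ is not a path, are exactly what is needed to make these non-isomorphism verifications go through, and keeping track of all of them cleanly is where the real work lies.
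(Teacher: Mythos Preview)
Your stub removes only the leaves at $v_1$, so the number of extensions of a fixed stub to an embedding of $S$ is $N=\binom{D}{\ell}$, not $\sum_i d_i\binom{D-1}{\ell}$: there is no fork at $v_1$ to rebuild, hence no $d_i$ factor. You have imported the formula from Lemma~\ref{lem:cater2}, where the stub is also missing a fork and every extension to $S$ genuinely needs a non-leaf neighbour of $v_{S\to T}$. That $\sum_i d_i$ factor is exactly what makes the degenerate regime in Lemma~\ref{lem:cater2} harmless, because there $N$ itself is small when $\sum_i d_i$ is small. Here $N=\binom{D}{\ell}$ carries no such factor and can be arbitrarily large even when $\sum_i d_i=0$.

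This is a real obstruction, not just a bookkeeping slip. Consider a host $T$ containing a vertex $u$ with $D$ pendant leaves and one further edge into a long path along which the rest of $S_0$ embeds (with the required leaves attached at the images of $v_3,\dots,v_{m-2},v_m$, each of those vertices having no spare edges in $T$). For the stub with $v_{S\to T}=u$ one has $N=\binom{D}{\ell}$, while $N''=0$ (all external neighbours of $u$ are leaves of $T$), $N'''=0$ (the image of $v_2$ has degree~$2$ in $T$), and any $Q$-based replacement contributes at most $\binom{D}{\ell-j}$ embeddings for each $j\ge 1$, which for $D\gg\ell$ is $o\bigl(\binom{D}{\ell}\bigr)$. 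A one-ended stub simply cannot generate enough non-$S$ extensions in this configuration, and the discharging collapses.

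The paper's proof repairs this by making the stub two-ended: it removes the leaves at \emph{both} $v_1$ and $v_m$, so a stub with available degrees $A$ and $B$ at the two ends has $\binom{A}{\alpha}\binom{B}{\beta}$ extensions to $S$, and the principal replacements redistribute leaves between the ends, yielding $\binom{A}{\alpha\pm 1}\binom{B}{\beta\mp 1}$ (or a $\pm 2$ shift when $\alpha=\beta-1$). Log-concavity of binomials then gives $\binom{A}{\alpha}\binom{B}{\beta}\le 9N$ uniformly, with $Q$ invoked only in the genuinely bounded corner cases $A=\alpha$, $B=\beta$, or $\alpha=\beta=1$. The two-ended stub with leaf-shifting between the endpoints is the idea you are missing.
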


\begin{proof}
Let $\alpha > 0$ and $\beta >0$ be the number of leaves adjacent to $v_1$ and $v_m$ respectively. By symmetry, we can assume that $\alpha \leq \beta$.
Let $S'$ be the caterpillar obtained from $S$ by removing the $\alpha$ leaves adjacent to $v_1$ and $\beta$ leaves adjacent to $v_m$.

Let $v_T$ be a vertex of $T$ such that
there exist $(2k+1)$-vertex paths $P_1$ and $P_2$ starting at $v_T$ that
are disjoint except at $v_T$ itself; such a choice
is possible because the radius of $T$ is at least $4k$.

In this proof, a \emph{stub} is an embedding of $S'$ in $T$ together with a choice of orientation for the longest path in the embedding.
(The length of this path is the same as the distance between $v_1$ and $v_m$.)
Given a stub, let $v'_1, \dots, v'_m$ be the vertices of the longest path in the embedding, ordered according to the chosen orientation,
and let $A$ and $B$ be the degrees of $v'_1$ and $v'_m$ minus 1.
Let $Q$ be the unique path in $T$ between the vertices $v_{S\to T}$ and $v_T$
prolonged by $P_1$ if $P_1$ does not contain the edge from $v_T$ towards $v_{S\to T}$ and prolonged by $P_2$ otherwise.

We analyze the case when $\alpha=\beta=1$ separately at the end of the proof,
so for now suppose that $\beta\geq 2$.
The number of ways the embedding of $S'$ can be extended to an embedding of $S$ with each $v'_i$ corresponding to $v_i$ is $\binom{A}{\alpha}\binom{B}{\beta}$.
We will associate to each embedding of $S'$ in $T$ a set of $N$ embeddings of $k$-vertex trees non-isomorphic to $S$ so that
\[\binom{A}{\alpha}\binom{B}{\beta}\le 9N.\]
Note that if $A<\alpha$ or $B<\beta$, then there is nothing to prove,
so we assume that $A\ge\alpha$ and $B\ge\beta$.

If $B=\beta$,
then we consider extensions obtained from the embedding of $S'$
by adding $\alpha$ leaves to $v'_1$, $\beta-2$ leaves to $v'_m$ and
then the first two edges of $Q$ not contained in the embedding.
If the obtained embedding is a caterpillar, then the sum of the degrees of its first and last internal vertices is less than the sum of the degrees of the first and last internal vertices of $S$. Therefore, the $N=\binom{A}{a} = \binom{A}{\alpha}\binom{B}{\beta}$ obtained embeddings are not isomorphic to $S$.

If $A=\alpha\ge 2$,
then consider extensions obtained from the embedding of $S'$
by adding $\alpha-2$ leaves to $v'_1$, $\beta$ leaves to $v'_m$ and
then the first two edges of $Q$ not contained in the embedding.
Again, if the obtained embedding is a caterpillar, then the sum of the degrees of its first and last internal vertices is less than the sum of the degrees of the first and last internal vertices of $S$.
Therefore, the obtained embeddings are not isomorphic to $S$, and
their number is $N=\binom{B}{\beta}$, which is equal to $\binom{A}{\alpha}\binom{B}{\beta}$.

If $A=\alpha=1$ but $2\le\beta<B$,
then the number of extensions to $S$ is $\binom{B}{\beta}$, and
we consider extensions of $S'$ obtained
by either adding $\beta+1$ leaves to $v'_2$ or
adding $\beta-1$ leaves to $v'_2$ and the first two edges of $Q$ not contained in the embedding;
the number $N$ such extensions is
\[N=\binom{B}{\beta-1}+\binom{B}{\beta+1}\ge\binom{B}{\beta}.\]

Hence, we can assume that $A>\alpha$ and $B>\beta$ in the remainder of the analysis of the case $\beta\ge 2$.
We first deal with the case when $\alpha\not=\beta-1$.
Consider extensions obtained from the embedding of $S'$
by either adding $\alpha+1$ leaves to $v'_1$ and $\beta-1$ leaves to $v'_m$ or
adding $\alpha-1$ leaves to $v'_1$ and $\beta+1$ leaves to $v'_m$;
the number of such extensions is
\begin{align*}
N & =\binom{A}{\alpha+1}\binom{B}{\beta-1}+\binom{A}{\alpha-1}\binom{B}{\beta+1} \\
  & =\binom{A}{\alpha-1}\binom{B}{\beta-1}\left(\frac{(A-\alpha+1)(A-\alpha)}{\alpha(\alpha+1)}+\frac{(B-\beta+1)(B-\beta)}{\beta(\beta+1)}\right) \\
  & \ge\frac{1}{4}\binom{A}{\alpha-1}\binom{B}{\beta-1}\left(\frac{(A-\alpha+1)^2}{\alpha^2}+\frac{(B-\beta+1)^2}{\beta^2}\right) \\
  & \ge\frac{1}{2}\binom{A}{\alpha-1}\binom{B}{\beta-1}\frac{A-\alpha+1}{\alpha}\frac{B-\beta+1}{\beta}=\frac{1}{2}\binom{A}{\alpha}\binom{B}{\beta}.
\end{align*}

It remains to analyze the case $\alpha=\beta-1$.
Consider extensions obtained from the embedding of $S'$
by either adding $\alpha+2$ leaves to $v'_1$ and $\beta-2$ leaves to $v'_m$ or
adding $\alpha-1$ leaves to $v'_1$ and $\beta+1$ leaves to $v'_m$; the number of such extensions is
\begin{align*}
N & =\binom{A}{\alpha+2}\binom{B}{\beta-2}+\binom{A}{\alpha-1}\binom{B}{\beta+1},
\end{align*}
unless $A=\alpha+1$.
We next argue that $\binom{A}{\alpha}\binom{B}{\beta}\le 9N$.
If $\binom{A}{\alpha-1}\binom{B}{\beta+1}\le \frac{1}{9}\binom{A}{\alpha}\binom{B}{\beta}$,
then $9\le\frac{(\beta+1)(A-\alpha+1)}{\alpha(B-\beta)}$
and
\begin{align*}
\binom{A}{\alpha+2}\binom{B}{\beta-2}
  & = \frac{\beta(\beta-1)(A-\alpha)(A-\alpha-1)}{(\alpha+2)(\alpha+1)(B-\beta+2)(B-\beta+1)}\binom{A}{\alpha}\binom{B}{\beta} \\
  & \ge \frac{2^2}{3^4\cdot 6^2}\left(\frac{(\beta+1)(A-\alpha+1)}{\alpha(B-\beta)}\right)^2\binom{A}{\alpha}\binom{B}{\beta} \\
  & \ge \frac{9^2}{3^6}\binom{A}{\alpha}\binom{B}{\beta}=\frac{1}{9}\binom{A}{\alpha}\binom{B}{\beta}\,,
\end{align*}
so $N\geq \frac{1}{9}\binom{A}{\alpha}\binom{B}{\beta}$.

Finally, we deal with the case $A=\alpha+1$.
Consider extensions obtained from the embedding of $S'$
by adding $\alpha-1$ leaves to $v'_1$, $\beta-1$ leaves to $v'_m$ and
then the first two edges of $Q$ not contained in the embedding, or
adding $\alpha-1$ leaves to $v'_1$ and $\beta+1$ leaves to $v'_m$;
as before, these extensions are not isomorphic to $S$, and
the number of such extensions is
\[\binom{A}{\alpha-1}\binom{B}{\beta-1}+\binom{A}{\alpha-1}\binom{B}{\beta+1}\ge\binom{A}{\alpha+1}\binom{B}{\beta-1}+\binom{A}{\alpha-1}\binom{B}{\beta+1},\]
which is at least $\binom{A}{\alpha}\binom{B}{\beta}/2$ as established in the case $\alpha\not=\beta-1$.

We complete the case $\beta\ge 2$
by a discharging procedure similar to that used in the proof of Lemma \ref{lem:cater2}.
Each embedding of a $k$-vertex tree non-isomorphic to $S$ sends a charge of $9$ to each stub that it extends.
In this way, each stub receives $ 9 N \ge \binom{A}{\alpha}\binom{B}{\beta}$ charge,
which is then distributed to the extensions of the stub into embeddings of $S$.

To finish the analysis of the discharging argument,
it remains to bound the total amount of charge that each embedding of a $k$-vertex tree non-isomorphic to $S$ sends.
Fix an embedding of a $k$-vertex tree non-isomorphic to $S$ and
suppose that it can be obtained from a stub by at least one of the above processes.
If this process \emph{does not} involve adding two unused edges of $Q$,
then the embedding is a caterpillar and the vertex that corresponds to $v'_1$
is either the first or last internal vertex of the embedding, or
the unique leaf of the first internal vertex if the first internal vertex has degree $2$, or
the unique leaf of the last internal vertex if the last internal vertex has degree $2$.
For each choice of $v'_1$ there is at most one valid choice of $v'_m$ at the correct distance in $T$, and
the stub is then determined by removing the leaves of the embedding adjacent to $v'_1$ and $v'_m$.
Thus, the embedding sends at most $9 \cdot 4$ units of charge in this way. If the embedding can be obtained from a stub by a process that
\emph{does} involve adding two unused edges of $Q$,
then there are at most two places where these edges could have been added.
After choosing which of these sets of two edges to remove from the embedding,
we follow the same procedure as above to obtain the possible stubs,
so the embedding sends at most $9 \cdot 2 \cdot 4$ additional charge in this way.
In total,
each embedding of a $k$-vertex tree non-isomorphic to $S$ sends at most $9\cdot 3\cdot 4 < 999$ units of charge,
so the density of $S$ in $T$ is at most $1-10^{-3}$.

Finally, we return to the case $\alpha=\beta=1$;
the argument is again based on a discharging procedure.
Since $S$ is not a path, we have $m\geq 5$.
Consider extensions of the embedding of $S'$ obtained
by either adding two leaves to the vertex $v'_1$ or adding two leaves to the vertex $v'_m$.
The number of such extensions is $\binom{A}{2}+\binom{B}{2}\ge \frac{1}{4}AB$ unless $A=B=1$.
If $A=B=1$, then $\binom{A}{\alpha}\binom{B}{\beta}=1$ and we
consider the embedding obtained from $S'$ by adding the first two edges of $Q$ that are not contained in $S'$.
If the first of these two edges attaches to a vertex of $S'$ other than $v'_1$, $v'_2$, $v'_{m-1}$, or $v'_m$,
then the obtained embedding is not a caterpillar.
If it attaches to $v'_2$ or $v'_{m-1}$, then the second or second-last internal vertex of the obtained caterpillar has degree $3$.
Thus the obtained embedding is not isomorphic to $S$ unless the added edges attach to $v'_1$ or $v'_m$.
By symmetry, we assume that they attach to $v'_m$.
Any isomorphism between $S$ and the obtained embedding maps the vertex $v_{m-1}$ of $S$ to the vertex $v'_3$ of the obtained embedding.
In particular, if the two trees are isomorphic, then the degree of $v'_3$ in $S'$ is $2$, and
we instead consider the embedding obtained from $S'$ by adding a leaf to $v'_2$ and a leaf to $v'_m$, if such an embedding exists.
If such an embedding does not exist,
we instead consider the embedding obtained from $S'$ by removing the edge $v'_1 v'_2$ and
adding the first three edges of $Q$.
In all cases above from the analysis of the case $A=B=1$,
the obtaining embedding is non-isomorphic to $S$.

Thus, when each embedding of a $k$-vertex tree non-isomorphic to $S$
sends $4$ units of charge to each stub that it extends,
each stub such that $A\not=1$ or $B\not=1$ receives at least $AB$ units of charge,
which it can then redistribute to its extensions into embeddings of $S$.
We next estimate additional charge because of embeddings considered in the case $A=B=1$;
for this analysis, fix an embedding of a $k$-vertex tree non-isomorphic to $S$.
\begin{itemize}
\item If the embedding can be obtained from the process of adding the first two unused edges of $Q$,
      then there are at most two places where these edges could have been added.
      After choosing which of these two choices of two edges to remove,
      the vertices $v'_1$ and $v'_m$ must correspond to the unique leaves of the first and last internal vertices, and
      the stub is determined once a choice for the correspondence is made.
\item If the embedding can be obtained from the process of adding a leaf to $v'_2$ and $v'_{m-1}$,
      then either the first or last internal vertex of the embedding has two leaves, and
      this vertex corresponds to either $v'_2$ or $v'_{m-1}$.
      One of the two leaves then corresponds to either $v'_1$ or $v'_m$, and
      the stub is determined by choosing the correspondence, removing the other leaf, and
      removing the unique leaf of the terminal internal vertex at the other end of the caterpillar. 
\item If the embedding can be obtained from the process of removing the edge $v'_1 v'_2$ and
      adding the first three unused edges of $Q$,
      there are at most two places where these edges could have been added.
      After choosing which of these two options of three edges to remove,
      $v'_3$ is the first or last internal vertex of the resulting caterpillar,
      $v'_2$ is the unique leaf of $v'_3$ in the caterpillar, and
      $v'_1$ is the unique neighbor of $v'_2$ in $T$ that is not in the caterpillar.
\end{itemize}      
Each embedding of a $k$-vertex tree non-isomorphic to $S$ sends at most $12$ additional units of charge and
so it sends at most $16<99$ units of charge in total.
We conclude that if $\alpha = \beta = 1$, then $d(S,T) \leq 1-10^{-2}$.
\end{proof}

The next theorem summarizes our analysis of caterpillars.

\begin{theorem}
\label{thm:cater}
Every caterpillar $S$ with $|S|\geq 17$ that is neither a star nor a path has inducibility at most $1-10^{-4}$.
\end{theorem}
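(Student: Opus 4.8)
The plan is to deduce Theorem~\ref{thm:cater} from Proposition~\ref{prop:radius}, Theorem~\ref{thm:forks}, and Lemmas~\ref{lem:cater2} and~\ref{lem:cater3+4} by a short structural case analysis. Fix a caterpillar $S$ with $k:=|S|\ge 17$ that is neither a star nor a path.

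\emph{Reduction to host trees of large radius.} Suppose for contradiction that the inducibility of $S$ exceeds $1-10^{-4}$. Choose a convergent sequence $(T_n)_{n\in\NN}$ of trees with $|T_n|\to\infty$ whose limit density of $S$ equals the inducibility of $S$; such a sequence exists because, letting $T_n$ be an $n$-vertex tree maximising $d(S,\cdot)$, we may realise the relevant limsup in the definition of inducibility along a subsequence and then, as in the introduction, extract a convergent sub-subsequence by Tychonoff's theorem. If the radii of the trees $T_n$ are bounded, then Proposition~\ref{prop:radius} gives $d(S_k,T_n)\to 1$, so $d(S,T_n)\to 0$ since $S\ne S_k$, a contradiction. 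Hence, after passing to a subsequence, the radius of $T_n$ tends to infinity and is at least $4k$ for all large $n$. It therefore suffices to prove that $d(S,T)\le 1-10^{-4}$ for every tree $T$ with radius at least $4k$.

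\emph{Case analysis.} Let $v_1,\dots,v_m$ be the path formed by the internal vertices of the caterpillar $S$. Since $S$ is not a star, $m\ge 2$, and since $S$ is not a path, some $v_i$ has degree at least $3$. The key observation is that the branch of $S$ rooted at $v_2$ that contains the endpoint $v_1$ of the internal path is always a fork: as $v_1$ is an endpoint of this path, its only non-leaf neighbour is $v_2$, so this branch consists of $v_2$, $v_1$, and the $\deg(v_1)-1\ge 1$ leaves adjacent to $v_1$; it is thus a star in which the root $v_2$ is a leaf, i.e.\ a fork of order $\deg(v_1)-1$. We now distinguish cases. If $m=2$, or if $m\ge 3$ and $\deg(v_2)=\deg(v_{m-1})=2$, then $S$ satisfies the hypotheses of Lemma~\ref{lem:cater3+4}, which gives $d(S,T)\le 1-10^{-3}$. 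Otherwise $m\ge 3$ and, after possibly reversing the labelling of the internal path, we may assume $\deg(v_2)\ge 3$; then $v_2$ is adjacent to a leaf and, by the observation above, is the root of a fork of order $\deg(v_1)-1\ge 1$. If $m=3$, this places $S$ (with $v_S:=v_2$) under the third alternative of Theorem~\ref{thm:forks}. If $m\ge 4$ and $\deg(v_1)\ge 3$, the fork has order at least $2$ and $S$ falls under the second alternative of Theorem~\ref{thm:forks}, giving $d(S,T)\le 1-10^{-4}$. If $m\ge 4$ and $\deg(v_1)=2$, the fork has order $1$ and $S$ satisfies the hypotheses of Lemma~\ref{lem:cater2}, giving $d(S,T)\le 1-10^{-3}$. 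In all cases $d(S,T)\le 1-10^{-4}$, which completes the proof.

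\emph{Main obstacle.} With the preceding sections in hand, Theorem~\ref{thm:cater} is essentially bookkeeping; the step requiring the most care is verifying that the case split above is exhaustive and that the hypotheses of the invoked results genuinely hold in each branch --- in particular the thresholds $k\ge 10$ versus $k\ge 17$, whether $v_S$ is adjacent to a leaf, the exact order of the fork rooted at $v_S$, and the number of internal vertices of $S$. The observation that the branch at an endpoint-adjacent internal vertex is always a fork is what reliably produces the vertex $v_S$ required by Theorem~\ref{thm:forks} or Lemma~\ref{lem:cater2} once $\deg(v_2)\ge 3$, and it is what makes the split clean.
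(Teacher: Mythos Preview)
Your proof is correct and follows essentially the same approach as the paper's own proof: reduce to host trees of large radius via Proposition~\ref{prop:radius}, then cover all caterpillars by Lemma~\ref{lem:cater3+4}, Lemma~\ref{lem:cater2}, and the second and third alternatives of Theorem~\ref{thm:forks}. Your case split is organised slightly differently (you branch first on whether $\deg(v_2)\ge 3$ after a possible relabelling, whereas the paper branches first on $m$), and you make explicit both the radius-$4k$ threshold and the observation that the branch at $v_2$ containing $v_1$ is always a fork of order $\deg(v_1)-1$; these are details the paper leaves implicit, but the content is the same.
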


\begin{proof}
The limit density of $S$ in any sequence of trees with bounded radius is $0$ by Proposition~\ref{prop:radius},
so it remains to investigate the density of $S$ in trees with unbounded radius.
Since $S$ is not a star, $S$ has more than one internal vertex.
The case where $S$ has two internal vertices is covered by Lemma~\ref{lem:cater3+4},
the case where $S$ has three internal vertices is covered by Lemma~\ref{lem:cater3+4} and
the third case of Theorem~\ref{thm:forks} (depending on whether the middle internal vertex has degree 2), and
the case where $S$ has four or more internal vertices is covered by Lemma~\ref{lem:cater2}, Lemma~\ref{lem:cater3+4}, and the second case of Theorem~\ref{thm:forks} (depending on the order of forks rooted at the second or second-last internal vertex of $S$ and their degrees).
\end{proof}

\section{Inducibility bounded away from 1}
\label{sec:main}

As detailed in the proof of Theorem~\ref{thm:main} below, the results in the preceding sections show that the inducibility of every tree with at least $17$ vertices is at most $1-10^{-8}$. On the other hand, the inducibility of every $k$-vertex tree $X$ that is neither a path nor a star is less than $1$, since for every convergent sequence of trees $(T_n)_{n\in\NN}$ with $|T_n|\to\infty$, if $\lim_{n\to\infty}d(X,T_n)=1$, then $\lim_{n\to\infty}d(P_k,T_n)=\lim_{n\to\infty}d(S_k,T_n)=0$, contradicting 
Proposition~\ref{prop:plimit}. In particular, trees with at most $16$ vertices have inducibility bounded away from $1$. 
These two results combined imply that the inducibility of every tree is at most $1-\varepsilon$ for some fixed constant $\varepsilon > 0$. 
In the interest of obtaining an explicit value of $\varepsilon$, we provide a crude upper bound on the inducibility of small trees.

\begin{lemma}
\label{lem:elimit}
For $k\geq 5$, the inducibility of every $k$-vertex tree $S$ that is neither a path nor a star is at most $1-k^{-(2k-3)}$.
\end{lemma}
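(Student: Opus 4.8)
The plan is to show directly that for any $n$-vertex host tree $T$ with $n$ large, at least a $k^{-(2k-3)}$ fraction of the $k$-vertex subtrees of $T$ are \emph{not} isomorphic to $S$; this bounds $d(S,T)$ by $1-k^{-(2k-3)}$ for every $T$, hence bounds the inducibility. Since $S$ is neither a path nor a star, the tree $S'$ obtained by deleting all leaves of $S$ is neither empty, a single vertex, nor (in the non-caterpillar analysis irrelevant here) — more to the point, $S'$ has at least two vertices, so $S$ has at least two distinct ``leaf-buckets''. First I would use a counting/injection argument: take any embedding of $S$ in $T$, pick a fixed internal vertex structure, and perturb it by moving a bounded number of edges to reach an embedding of some $k$-vertex tree $S''\not\cong S$. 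The quantitative heart is to show that from a positive proportion of copies of $S$ one can reach copies of non-$S$ trees, with bounded multiplicity.

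A cleaner route, and the one I would actually pursue, mimics the stub arguments of Sections~\ref{sec:major}--\ref{sec:cater} but crudely. Fix an embedding of $S$ in $T$; let $S_0$ be the ``core'' obtained by deleting one leaf from each of two distinct leaf-buckets of $S$ (this is well-defined since $S$ is neither a path nor a star, so it has at least two internal vertices, at least one of which is adjacent to a leaf, and in fact two removable leaves in distinct orbits). The number of ways to re-extend $S_0$ back to a $k$-vertex subtree of $T$ is some product of binomial/degree factors, say $N$; a bounded fraction of these re-extensions — at least $N/k^{O(k)}$ of them — give a tree non-isomorphic to $S$, using either that two branch-sizes at a vertex can be made unequal, or (when the local degrees are all too small) that there is a long path in $T$ available to attach the removed edges along, producing a tree with a different internal-vertex count or diameter. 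The multiplicity with which a given non-$S$ embedding $S''$ is produced is at most $k^{O(k)}$, since the core $S_0$ is recovered from $S''$ by choosing a bounded number of vertices/edges among its at most $k$ vertices and edges. Dividing, $d(S,T)\le 1-k^{-(2k-3)}$ once the exponents are tallied; the $2k-3$ is the crude sum of the exponents from the re-extension count and the recovery multiplicity, and I would not try to optimize it.

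The main obstacle is the case analysis needed to guarantee that a \emph{positive} (not merely nonzero) fraction of re-extensions of $S_0$ yields a non-$S$ tree: if every neighbourhood degree at the relevant vertices is tiny, the binomial re-extension counts collapse and one must instead route the two removed edges into $T\setminus S$ along a path, which is available only because $T$ has enough vertices — here one should first dispose of host trees of bounded radius via Proposition~\ref{prop:radius} (the limit density of $S$ is then $0$), and otherwise use that an $n$-vertex tree with $n$ large either has a long path or has a vertex of large degree, in either case furnishing the needed ``spare'' structure. The bookkeeping of ``at most $k^{j}$ choices'' at each recovery step, summed to the claimed exponent $2k-3$, is routine but must be done carefully; since the lemma only asserts a crude bound, generous over-counting at every step is acceptable.
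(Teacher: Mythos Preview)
Your plan is plausible in outline but takes a substantially heavier route than the paper. The paper does \emph{not} use stubs, re-extensions, or Proposition~\ref{prop:radius} here at all. Instead it gives a short discharging argument with no structural analysis of $S$ beyond ``not a path, not a star'': each embedding $S''\not\cong S$ starts with one unit of charge and spreads it equally among all $k$-vertex embeddings $S'$ that share an edge with $S''$, subject to a degree cap. The case split is on the maximum $T$-degree over the vertices of the receiving embedding $S'$. If this maximum is at most $k$, then any non-$S$ non-star $S''$ sharing an edge with $S'$ sends it at least $k^{-(2k-3)}$, since there are at most $(k-1)!\,(k-1)^{k-2}\le k^{2k-3}$ capped embeddings sharing an edge with a given $S''$. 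If instead some vertex $v$ of $S'$ has $T$-degree $d>k$, then the $\binom{d-1}{k-2}$ many $k$-vertex stars centred at $v$ containing an edge of $S'$ each send it at least $(k!\,d^{k-2})^{-1}$, and the total received is again at least $k^{-(2k-3)}$. The argument works directly for every host $T$ with $|T|\ge k^k$.

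By contrast, your stub approach imports the machinery of Sections~\ref{sec:major}--\ref{sec:cater}, which was designed to squeeze out \emph{absolute} constants and is overkill for a $k^{-O(k)}$ bound. It can be made to work, but the case analysis you yourself flag as the main obstacle---small local degrees forcing a detour through a long path in $T$, plus a separate treatment of bounded-radius hosts---is exactly what the paper's argument sidesteps. There is also a quantitative mismatch: in the paper the exponent $2k-3$ comes from the single clean count ``$k$-vertex subtrees of bounded degree sharing a fixed edge'', whereas in your scheme it is supposed to emerge as the sum of a re-extension exponent and a recovery-multiplicity exponent. Those are different quantities, and it is not evident they tally to $2k-3$; you would more likely end up with $1-k^{-ck}$ for some unspecified $c$, which is enough for Theorem~\ref{thm:main} but not for the lemma as stated.
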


\begin{proof}
Since the inducibility of $S$ is defined with respect to $d(S,T)$ where $|T|\to\infty$, it suffices to show that $d(S,T)\le 1-k^{-(2k-3)}$ for every tree $T$ with $|T|\geq k^k$. We prove the bound by a discharging argument. Every embedding of a $k$-vertex tree in $T$ that is not isomorphic to $S$ begins with one unit of charge and distributes the charge according to the following rules.

Every embedding $S''$ of a $k$-vertex tree that is neither isomorphic to $S$ nor a star
distributes its charge equally among all embeddings $S'$ of $k$-vertex trees that
share an edge with $S''$ such that the maximum degree of a vertex of $S'$ in $T$ is at most $k$.
Since there are at most $(k-1)!(k-1)^{k-2}\le k^{2k-3}$ such embeddings $S'$ for every embedding $S''$,
each embedding $S'$ of a $k$-vertex tree such that the maximum degree of a vertex of $S'$ in $T$ is at most $k$
receives at least $k^{-(2k-3)}$ units of charge.

Every embedding $S''$ of a $k$-vertex star distributes its charge as follows.
Let $v$ be the center of $S''$ and $d$ its degree in $T$.
The embedding $S''$ distributes its charge equally among all embeddings $S'$ of $k$-vertex trees that
share an edge with $S''$ such that the maximum degree of the vertices of $S'$ in $T$ is at most $d$.
Each such embedding $S'$ receives charge from at least $\binom{d-1}{k-2}$ embeddings of stars and
each embedding of a $k$-vertex star centered at $v$ sends charge to at most $(k-1)!k(d-1)^{k-2}\le k!d^{k-2}$ embeddings of $S$.
Hence, each embedding $S'$ of a $k$-vertex tree such that the maximum degree of the vertices of $S'$ in $T$ is $d>k$
receives at least
\[\binom{d-1}{k-2}\frac{1}{k!d^{k-2}}\ge\frac{1}{k!k^{k-2}}\ge k^{-(2k-3)}\]
units of charge.

Since $d(S,T)<1$ and every embedding of a $k$-vertex tree in $T$ (regardless of whether the embedding is of $S$ or not)
has at least $k^{-(2k-3)}$ units of charge at the end of the process described above,
it follows that $d(S,T)\le 1-k^{-(2k-3)}$.
\end{proof}

We now combine the results of Sections~\ref{sec:major}--\ref{sec:main} to prove the first main result of this paper.
As mentioned earlier, we do not attempt to optimize the upper bound on the inducibility presented in the theorem.

\begin{theorem}
\label{thm:main}
The inducibility of every tree $S$ that is neither a star nor a path is at most $1-10^{-35}$.
\end{theorem}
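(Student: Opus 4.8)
The plan is to assemble the theorem from the case analysis carried out in Sections~\ref{sec:major}--\ref{sec:cater} together with the small-tree bound of Lemma~\ref{lem:elimit}. First I would dispose of two easy reductions. By Proposition~\ref{prop:radius}, any convergent sequence of trees of bounded radius has limit density $0$ for every fixed non-star $S$, so the inducibility of $S$ is determined entirely by sequences of unbounded radius; in particular it suffices to bound $d(S,T)$ for trees $T$ of radius at least $4k$ (where $k=|S|$), since trees of smaller radius can contribute only through bounded-radius subsequences. Second, Lemma~\ref{lem:elimit} already handles every tree with $5\le |S|\le 16$ (giving inducibility at most $1-16^{-29}$, comfortably below $1-10^{-35}$), so I only need to treat $|S|\ge 17$.

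For $|S|\ge 17$ that is neither a star nor a path, I would split on whether $S$ is a caterpillar. If $S$ is a caterpillar, Theorem~\ref{thm:cater} directly gives inducibility at most $1-10^{-4}$. If $S$ is not a caterpillar, then by Proposition~\ref{prop:center} it has a hub $v_S$, and I would case on the structure of the branches rooted at $v_S$. Either $v_S$ is adjacent to at most one leaf, or it is the root of at least three major branches and at most one fork: in both subcases Theorem~\ref{thm:major} applies and gives $d(S,T)\le 1-10^{-7}$ for $T$ of radius at least $4k$. The remaining possibility for a non-caterpillar is that $v_S$ is the root of at least one fork and is adjacent to at least two leaves, which is exactly the first bullet of Theorem~\ref{thm:forks}, yielding $d(S,T)\le 1-10^{-4}$. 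One must check that these cases are exhaustive for a chosen hub $v_S$ — essentially, if $v_S$ is adjacent to two or more leaves and has no fork, it still has at least three non-trivial branches as a hub, all of them major, so the ``at most one fork'' clause of Theorem~\ref{thm:major} is satisfied vacuously.

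Combining, every tree on at least $17$ vertices that is neither a star nor a path has inducibility at most $\max\{1-10^{-7}, 1-10^{-4}\} = 1-10^{-7}$; together with the small-tree estimate this is at most $1-10^{-8}$, which is in particular at most $1-10^{-35}$. I expect the only real bookkeeping to be verifying the exhaustiveness of the structural dichotomy: one must argue that for a suitable choice of a hub of a non-caterpillar tree, the branch configuration at that hub always falls into one of the three analyzed patterns, and similarly that the caterpillar cases in Theorem~\ref{thm:cater} (split by number of internal vertices and the degrees/fork-orders at the second and second-last internal vertices) leave no gaps. This is not deep — it is a finite check against the hypotheses of Theorems~\ref{thm:major}, \ref{thm:forks}, \ref{thm:cater} and Lemmas~\ref{lem:cater2}, \ref{lem:cater3+4} — but it is the step where an oversight would be easy, so I would write it out carefully. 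Everything else is just taking the worst of the explicit constants produced by the preceding sections.
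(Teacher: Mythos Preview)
Your proposal is correct and follows essentially the same route as the paper: dispose of $|S|\le 16$ via Lemma~\ref{lem:elimit}, handle caterpillars via Theorem~\ref{thm:cater}, and for non-caterpillars case on the branch structure at a hub to land in Theorem~\ref{thm:major} or the first bullet of Theorem~\ref{thm:forks}; your exhaustiveness check (no fork at a hub forces all non-trivial branches to be major, hence Theorem~\ref{thm:major} applies) is exactly the paper's argument, and making the bounded-radius reduction via Proposition~\ref{prop:radius} explicit is a small improvement in presentation. One minor numerical slip: the sentence ``together with the small-tree estimate this is at most $1-10^{-8}$'' is false, since $1-16^{-29}>1-10^{-8}$; the combined bound is $1-16^{-29}\le 1-10^{-35}$, which is what you (and the paper) ultimately claim.
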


\begin{proof}
Since $S$ is neither a star nor a path, $|S|\geq 5$. If $|S|\leq 16$, then the inducibility of $S$ is at most $1-16^{-29} \leq 1-10^{-35}$ by Lemma \ref{lem:elimit}.
Now assume that $|S| \geq 17$. If $S$ is a caterpillar, then the inducibility of $S$ is at most $1-10^{-4}$ by Theorem~\ref{thm:cater}.
If $S$ is not a caterpillar, then consider an arbitrary hub $v$ of $S$. By the definition of a hub, $v$ is the root of three non-trivial branches.
If $v$ is adjacent to at most one leaf, or is the root of at most one fork and at least three major branches, then the inducibility of $S$ is at most $1-10^{-7}$ by Theorem~\ref{thm:major}.
Otherwise, $v$ is adjacent to at least two leaves, and additionally is either the root of at least two forks or at most two major branches.
In either case, $v$ is the root of a fork since every non-trivial branch is either a fork or is major.
Theorem~\ref{thm:forks} then guarantees that the inducibility of $S$ is at most $1-10^{-4}$.
\end{proof}

\section{Inducibility bounded away from 0}
\label{sec:sparkler}

A \emph{sparkler} is a graph obtained from a star by subdividing one of its edges once. The following result shows that sparklers are an infinite class of trees with inducibility bounded away from $0$, thus answering Problem~4 of Bubeck and Linial~\cite{BubL16} in the affirmative.

\begin{theorem}
\label{thm:sparkler}
The inducibility of every sparkler with at least four edges is at least $13/165$. 
\end{theorem}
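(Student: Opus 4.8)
The plan is to exhibit, for each sparkler $S$ with $r\ge 4$ edges, an explicit sequence of host trees whose limit density of $S$ is at least $13/165$. Recall that $S$ has $r+1$ vertices and a unique vertex $v_S$ of degree $r-1$, adjacent to $r-2$ leaves and to the midpoint of a pendant path of length two. I would take the host $T_N$ to be a caterpillar with spine $v_1,\dots,v_N$ in which every spine vertex carries $r-2$ pendant leaves together with $t$ pendant paths of length two, where $t=t(r)$ is a parameter (of order $r$) to be chosen at the end. Then every internal spine vertex has degree $r+t$, every other vertex has degree at most $2$, and $|T_N|=N(r-1+2t)\to\infty$. So it suffices to show that $d(S,T_N)$ converges, as $N\to\infty$, to a constant at least $13/165$.

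First I would count the embeddings of $S$ in $T_N$. Since only spine vertices can have degree at least $3$, the vertex of an embedded copy that plays the role of $v_S$ must be a spine vertex $v_i$; given $v_i$, a copy is obtained by choosing the neighbour $u$ of $v_i$ playing the degree-$2$ vertex together with the neighbour $w\ne v_i$ of $u$ playing the far leaf, and then choosing which $r-2$ of the remaining $r+t-1$ neighbours of $v_i$ are the leaves at $v_S$. One checks that the induced subtree obtained this way is always isomorphic to $S$, that distinct choices give distinct copies, and that conversely every copy arises in this way. There are $t+2(r+t-1)=3t+2r-2$ admissible pairs $(u,w)$ — one per pendant path at $v_i$, and $\deg_{T_N}(v_{i\pm1})-1=r+t-1$ for each of the two spine neighbours — so the number of embeddings of $S$ in $T_N$ equals $\bigl(1+o(1)\bigr)\,N\,(3t+2r-2)\binom{r+t-1}{r-2}$.

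Next I would count $Z_{r+1}(T_N)$, the number of all connected $(r+1)$-vertex subsets. Such a subset meets the spine in an interval $v_i,\dots,v_{i+\ell-1}$ for some $\ell\ge 1$, and since the count of completions factors over the spine vertices of this interval (each pendant leaf contributing a factor $1+x$ and each pendant path a factor $1+x+x^2$), the number of completions of a fixed interval of length $\ell$ is $\bigl[x^{\,r+1-\ell}\bigr]\bigl((1+x)^{r-2}(1+x+x^2)^{t}\bigr)^{\ell}$. Summing over $\ell$ and over the $\approx N$ positions of such an interval gives $Z_{r+1}(T_N)=\bigl(1+o(1)\bigr)\,N\sum_{\ell\ge 1}\bigl[x^{\,r+1-\ell}\bigr]\bigl((1+x)^{r-2}(1+x+x^2)^{t}\bigr)^{\ell}$. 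Hence $d(S,T_N)$ converges to the ratio of the two displayed quantities, and after fixing $t=t(r)$ it remains to prove that this ratio is at least $13/165$ for every $r\ge4$.

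The crux is this last, uniform-in-$r$ estimate. The denominator is a sum of binomial-type coefficients that grows with $r$ and is dominated by the short intervals, which correspond to broom- and star-like configurations concentrated at a single spine vertex; one must bound it by $165/13$ times the numerator simultaneously for all $r$. I would control each term $\bigl[x^{\,r+1-\ell}\bigr]\bigl((1+x)^{r-2}(1+x+x^2)^{t}\bigr)^{\ell}$ against the ``sparkler coefficient'' $\binom{r+t-1}{r-2}$ — for instance by a saddle-point or term-by-term ratio estimate, retaining only boundedly many dominant values of $\ell$ — and then optimise $t$: enlarging $t$ increases the number of copies of $S$ but also inflates the long-interval (path-like) subsets, and the constant $13/165$ should fall out of the best bounded choice of $t$. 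I expect this balancing of numerator against denominator, rather than the counting itself, to be the main obstacle, and, as the theorem already flags, no attempt at optimising the constant is needed.
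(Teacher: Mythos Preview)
Your setup is sound: the counting of sparkler copies and the generating-function expression for $Z_{r+1}(T_N)$ are both correct. But the proof is not complete. The entire content of the theorem lies in the uniform-in-$r$ estimate you defer at the end, and you neither carry it out nor give any reason to believe the specific constant $13/165$ is achievable from your construction. Your sentence ``the constant $13/165$ should fall out of the best bounded choice of $t$'' is wishful: $13/165$ is an artefact of the paper's particular construction (it arises from the ratio $\tfrac{2\binom{3k+1}{k-2}}{4\binom{3k}{k}}$ evaluated at $k=4$), and a different host family will in general produce a different constant. Your generating-function sum $\sum_{\ell\ge 1}[x^{r+1-\ell}]\bigl((1+x)^{r-2}(1+x+x^2)^t\bigr)^\ell$ has no obvious closed form and would require real work to bound by $\tfrac{165}{13}$ times your numerator uniformly in $r$; as written this is a promissory note, not a proof.

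The paper sidesteps exactly this difficulty by a cleaner construction. Rather than decorating every spine vertex, it places the decorated vertices (``vertebrae'') at distance $k+1$ apart and attaches only $3k$ pendant leaves (no length-$2$ paths) to each. The spacing guarantees that every $(k+1)$-vertex subtree contains \emph{exactly one} vertebra, so $Z_{k+1}(T_n)$ becomes a single closed-form sum $n\sum_{j=0}^k(j+1)\binom{3k}{k-j}$, which is bounded by $4n\binom{3k}{k}$ via the elementary ratio inequality $\binom{3k}{k-j}/\binom{3k}{k-j-1}\ge 2$. The sparkler count is likewise a single binomial, and the ratio is computed exactly. The key idea you are missing is this spacing trick: it turns the global subtree count into a local one and eliminates the need for any asymptotic estimate.
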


\begin{proof}
Fix $k\geq 4$, and let $S'_k$ be the sparkler with $k$ edges;
that is, the graph obtained from the star with $k-1$ leaves by subdividing one of its edges.
We will construct a sequence $(T_n)_{n\in\NN}$ of trees with $|T_n|\to \infty$ such that $d(S'_k,T_n)\ge 13/165$,
which implies the theorem.

As illustrated in Figure~\ref{fig:sparkler}, let $T_n$ be the tree
obtained from a path with $n(k+1)+k$ vertices (called the \emph{spine})
by adding $3k$ leaves to its $(j(k+1))$-th vertex for $j\in\{1,\dots,n\}$;
each of the $n$ vertices to which the leaves are attached is called a \emph{vertebra}.

\begin{figure}
\begin{center}
\epsfbox{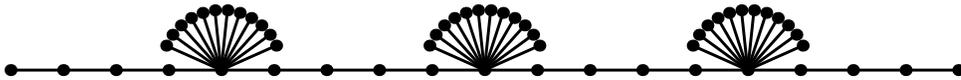}
\end{center}
\caption{The tree $T_3$ constructed for $k=4$ in the proof of Theorem~\ref{thm:sparkler}.}
\label{fig:sparkler}
\end{figure}

Observe that the number of copies of $S'_k$ in $T_n$ is
\[ 2n \binom{3k+1}{k-2}. \]
We next count the number of all $k$-edge subtrees of $T_n$.
Each $k$-edge subtree of $T_n$ contains exactly one of the vertebrae.
The number of $k$-edge subtrees that contain exactly $j$ edges from the spine of $T_n$ for $j\in\{0,\dots,k\}$ is
\[(j+1)n\binom{3k}{k-j}.\]
Thus the total number of $k$-edge subtrees of $T_n$ is
\begin{equation}
n\sum_{j=0}^k (j+1)\binom{3k}{k-j}.\label{eq:sparkler1}
\end{equation}
Observe that
\[\frac{\binom{3k}{k-j}}{\binom{3k}{k-j-1}}=\frac{2k+j+1}{k-j}\ge 2\]
for every $j\in\{0,\dots,k-1\}$,
which can be used iteratively on \eqref{eq:sparkler1} to bound the number of $k$-edge subtrees of $T_n$:
\[n\sum_{j=0}^k (j+1)\binom{3k}{k-j}
  \leq\binom{3k}{k}n\sum_{j=0}^k\frac{j+1}{2^j}
  \leq\binom{3k}{k}n\sum_{j=0}^\infty\frac{j+1}{2^j}\,.\]
The arithmetico--geometric series in the last expression sums to $4$, so it follows that the density of $S'_k$ in $T_n$ is at least
\[\frac{2\binom{3k+1}{k-2}n}{4\binom{3k}{k}n}=\frac{(3k+1)k(k-1)}{2(2k+3)(2k+2)(2k+1)}\ge\frac{13}{165},\]
where the last inequality holds since $k\ge 4$.
\end{proof}

We remark that the construction from Theorem~\ref{thm:sparkler} can be optimized
by adding $\lceil\alpha k\rceil$ leaves instead of adding $3k$ to the vertebrae for $\alpha\approx 2.8507$,
which yields that the inducibility of sufficiently large sparklers is at least $0.19004$,
while the bound presented in the proof converges to $3/16$ for $k$ tending to infinity.

\section{Universal sequence of trees}
\label{sec:universal}

In this section, we prove the existence of a universal sequence of trees.

\begin{theorem}
\label{thm:universal}
There exists a sequence $(T_n)_{n\in\NN}$ of trees
in which the limit density $\lim\limits_{n\to \infty} d(S,T_n)$ of every tree $S$ is positive.
\end{theorem}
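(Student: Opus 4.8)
The plan is to construct $(T_n)_{n\in\NN}$ explicitly by iteratively ``growing'' a tree so that every finite tree eventually appears with many embeddings, while keeping the total count $Z_k(T_n)$ under control so that the density does not decay to $0$. A natural first attempt would be to take $T_n$ to be a complete rooted tree where each internal vertex has $n$ children and the depth is, say, $n$; such trees are universal in the sense that every $k$-vertex tree embeds, but one must check that the number of embeddings of a fixed $S$ is a constant fraction of $Z_k(T_n)$. The key quantitative fact to establish is a lower bound of the form $d(S,T_n)\ge c_S>0$ for all large $n$, where $c_S$ depends only on $S$. To do this I would fix a $k$-vertex tree $S$ and a typical $k$-vertex subtree $S^\ast$ of $T_n$, and compare the number of embeddings of $S$ with the number of embeddings of $S^\ast$ by a local-surgery argument analogous to the discharging/``moving edges'' technique used in Sections~\ref{sec:major}--\ref{sec:cater}: any $k$-vertex subtree of $T_n$ can be transformed into a copy of $S$ by a bounded number of local modifications, and each copy of $S$ arises from only boundedly many subtrees, the bounds depending only on $k$ (hence only on $S$).

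The main structural point is to choose the branching so that $T_n$ is simultaneously ``bushy enough'' that every small tree embeds and ``homogeneous enough'' that the profile stabilizes. Concretely I would let $T_n$ be the perfect $n$-ary tree of depth $n$ (every non-leaf has exactly $n$ children, all leaves at depth $n$), or a slight variant thereof. Then for a fixed $k$, almost all $k$-vertex subtrees of $T_n$ lie deep inside $T_n$, far from the root and far from the leaves, so they all ``see'' the same local structure: an infinite $n$-ary tree in which one may freely move from any vertex to any of its $n$ children or to its parent. In this local picture, converting one $k$-vertex subtree into another requires only rerouting $O(k)$ edges among the $\ge n-k$ available sibling slots at each branch point, and the number of ways to do this is comparable for $S$ and for a generic subtree. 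Summing over the $\Theta(n^n)$ choices of location and the $\mathrm{poly}(n)$ (in fact $n^{O(k)}$) choices of shape within a fixed location, the ratio of embeddings of $S$ to $Z_k(T_n)$ tends to a positive limit as $n\to\infty$, giving $\lim_{n\to\infty} d(S,T_n)>0$ after passing to a convergent subsequence via Tychonoff's theorem (as noted in the introduction, every sequence of trees has a convergent subsequence, so we may assume $(T_n)$ is convergent).

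The steps, in order, would be: (1) define $T_n$ precisely and verify $|T_n|\to\infty$; (2) show that for each fixed $k$, the number of $k$-vertex subtrees meeting the top $k$ levels or the bottom $k$ levels is a vanishing fraction of $Z_k(T_n)$, so that it suffices to analyze ``interior'' subtrees; (3) for interior subtrees, set up a bijection-with-multiplicity (a discharging argument) between embeddings of $S$ and embeddings of an arbitrary $k$-vertex tree $S'$, with multiplicities bounded by a function of $k$ only; (4) conclude $d(S,T_n)\ge c_k>0$ for all large $n$, where $c_k$ is explicit, e.g.\ $c_k\ge (2k)^{-O(k)}$ divided by the number of isomorphism types of $k$-vertex trees; (5) pass to a convergent subsequence to get genuine limit densities, all positive.

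The hard part will be step (3): making the local-surgery comparison uniform over \emph{all} pairs of $k$-vertex trees and handling the boundary effects cleanly, since unlike in the upper-bound theorems of Sections~\ref{sec:major}--\ref{sec:cater} (where one only needs \emph{some} non-isomorphic tree to be plentiful) here we need \emph{every} isomorphism type to be plentiful simultaneously, so the surgery must be shown to reach any target shape. A secondary technical nuisance is that an ``embedding'' in this paper is an unlabeled subtree, so automorphisms of $S$ and of the host must be accounted for, but since $T_n$ is vertex-transitive on each level and highly symmetric, these factors are bounded by $k!$ and do not affect positivity. If the perfect $n$-ary tree proves awkward at the boundary, a cleaner alternative is to take $T_n$ to be an $n$-ary tree of depth $n$ with the leaves deleted and the process iterated, or simply to weight the construction so that a $1-o(1)$ fraction of $k$-subtrees are interior; either way the core estimate in step (3) is unchanged.
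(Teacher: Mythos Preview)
Your step~(2) is false, and the construction collapses because of it. In the perfect $n$-ary tree $T_n$ of depth $n$, partition the $k$-vertex subtrees according to the depth of their topmost vertex. There are $n^{n-1}$ vertices at depth $n-1$, and each is the top of exactly $\binom{n}{k-1}$ subtrees of order $k$, every one of which is a star since only one level remains below; this already gives $n^{n-1}\binom{n}{k-1}\sim n^{n+k-2}/(k-1)!$ copies of $S_k$. On the other hand the number of subtrees whose top lies at depth at most $n-k+1$ is at most $\bigl(\sum_{i\le n-k+1}n^{i}\bigr)\cdot G_k(n)$, where $G_k(n)=O_k(n^{k-1})$ is the number of $k$-vertex rooted subtrees of the infinite $n$-ary tree containing the root; this is $O_k(n^{n})$. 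Hence for every fixed $k\ge 3$ the bottom level alone outweighs the entire interior by a factor of order $n^{k-2}$, so $d(S_k,T_n)\to 1$ and $d(S,T_n)\to 0$ for every $k$-vertex tree $S\ne S_k$. Your proposed patches do not help: deleting the leaves of $T_n$ simply returns the perfect $n$-ary tree of depth $n-1$, and any regular tree whose branching factor tends to infinity with $n$ exhibits the same star domination at its deepest internal level, since that level carries a $1-O(1/n)$ fraction of all internal vertices regardless of the depth.

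The paper's construction is designed to defeat exactly this phenomenon, and it does \emph{not} use a single high-degree tree. For every $d\le n$ it glues in $(n!/d!)^2$ disjoint copies of the complete $d$-ary tree $B_d$ of depth $d$, with the gluing done so that $\Delta(T_n)\le n+1$. The multiplicities are chosen so that for each fixed $k$ the copies of $B_k$ alone carry a positive proportion of $Z_k(T_n)$; since $B_k$ contains every $k$-vertex tree, one gets $d(S,T_n)\ge (n!/k!)^2/Z_k(T_n)$ immediately, and the only work is an upper bound on $Z_k(T_n)$ via the subadditivity estimate $Z_k(A\oplus B)\le Z_k(A)+Z_k(B)+(k\Delta)^{k-1}$, which reduces to the convergence of an explicit series in $d$. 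No surgery or discharging between isomorphism types is needed. The idea you are missing is that host pieces of degree about $k$ must be present in the right abundance for every $k$ simultaneously: bounded degree cannot host large stars, while unbounded degree forces star density to $1$, and only a weighted mixture across all degrees reconciles these constraints.
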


\begin{proof}
To describe the construction, we first define a gluing operation on trees, which we denote by $\oplus$;
this operation has already been used in the context of tree profiles in~\cite{BubL16}.
If $T$ and $T'$ are trees, then 
$T\oplus T'$ is any tree obtained from the disjoint union of $T$ and $T'$
by joining a vertex of $T$ and a vertex of $T'$ by an edge. The resulting tree depends, of course, on which vertices are chosen to be joined by an edge, but the choice will not influence our arguments as
long as the maximum degree of the resulting tree is controlled when we do a sequence of these operations. In particular, if we always choose a leaf of $T$ and a leaf of $T'$, then the maximum degree does not increase (unless $T\cong K_2$ or $T'\cong K_2$).

Observe that if $\Delta(T\oplus T')$ is the maximum degree of the resulting tree,
then the number of $k$-vertex trees containing the gluing edge
is at most $(k-1)^{k-1}\left(\Delta(T\oplus T')-1\right)^{k-1}\le\left(k(\Delta(T\oplus T')-1)\right)^{k-1}$ (start
with the gluing edge and then add $k-1$ edges iteratively, having at most $(k-1)\left(\Delta(T\oplus T')-1\right)$ at each iteration),
which yields that
\begin{equation}
Z_k(A) + Z_k(B) \leq Z_k(A \oplus B) \leq Z_k(A) + Z_k(B) + \left(k(\Delta(T\oplus T')-1)\right)^{k-1}
\label{ineq:glue1}
\end{equation}
We further define an iterative version of the gluing operation $\oplus$
by setting $T^{\oplus 1}=T$ and $T^{\oplus \ell} = T^{\oplus (\ell - 1)}\oplus T$ for $\ell\ge 2$.

Let $B_d$ be the complete $d$-ary tree of depth $d$; that is, $B_d$ is the rooted tree such that every internal vertex has $d$ children and
every leaf is at distance $d$ from the root. Observe that
$|B_d|=1+d+d^2+\cdots+d^d=\frac{d^{d+1}-1}{d-1}\le d^{d+1}$,
the maximum degree of $B_d$ is $d+1$ if $d\geq 2$ and $1$ if $d=1$, and
the tree $B_d$ contains a copy of every tree with $d$ vertices.
We now define the sequence $(T_n)_{n\in\NN}$ in the statement of the theorem.
The tree $T_n$ is obtained by gluing copies of the trees $B_1,\ldots,B_n$ in a ratio such that 
a significant proportion of the $k$-vertex subtrees in the resulting tree $T_n$ arises from copies of $B_1,\ldots,B_k$.
Formally, set $T_1=B_1$, and for $n\ge 2$, define 
$$T_n=B_n\oplus\left(T_{n-1}^{\oplus n^2}\right),$$  where the gluing operation is performed so that $\Delta(T_n)\leq n+1$. 
Observe that $T_n$ consists of $\left(\frac{n!}{d!}\right)^2$ copies of $B_d$ for $d\in\{1,\dots,n\}$. See Figure~\ref{fig:universal} for an illustration.

\begin{figure}
\begin{center}
\epsfbox{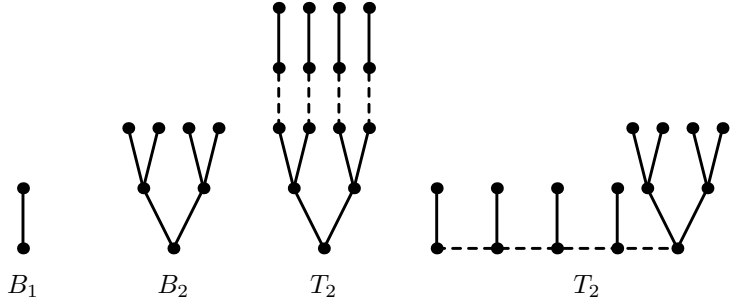}
\end{center}
\caption{The trees $B_1$, $B_2$, and two possible choices of $T_2$. Edges added by the operation $\oplus$ are dashed.}
\label{fig:universal}
\end{figure}

Fix a $k$-vertex tree $S$ with $k\ge 3$ for the rest of the proof, and note that
\begin{equation}
d(S,T_n)\ge\frac{1}{Z_k(T_n)}\cdot\left(\frac{n!}{k!}\right)^2
\label{ineq:glueZ}
\end{equation}
for every $n\ge k$.
We next upper bound the number of $k$-vertex subtrees in $T_n$ using \eqref{ineq:glue1}:
\begin{align*}
Z_k(T_n) & \le Z_k(B_n) + Z_k(T_{n-1}^{\oplus n^2}) + (kn)^{k-1}  \\
& \le Z_k(B_n) + n^2 Z_k(T_{n-1}) + n^2(kn)^{k-1} \\
         & = Z_k(B_n) + n^2 Z_k(T_{n-1}) + k^{k-1}n^{k+1}.
\end{align*}
Iterating the inequality, we obtain that
\[Z_k(T_n)\le \left[ \sum_{d=k+1}^n\left(\frac{n!}{d!}\right)^2\left(Z_k(B_d)+k^{k-1}d^{k+1}\right) \right]+
             \left(\frac{n!}{k!}\right)^2 Z_k(T_k).\]
We next analyze the sum from the above expression:
\begin{align*}
\lim_{n\to\infty}\frac{\sum\limits_{d=k+1}^n\left(\frac{n!}{d!}\right)^2\left(Z_k(B_d)+k^{k-1}d^{k+1}\right)}{\left(\frac{n!}{k!}\right)^2}
  & = \sum_{d=k+1}^{\infty}\left(\frac{k!}{d!}\right)^2\left(Z_k(B_d)+k^{k-1}d^{k+1}\right)\\
  & \le \sum_{d=k+1}^{\infty}\left(\frac{k!}{d!}\right)^2\left(d^{d+1}k^{k-1}d^{k-1}+k^{k-1}d^{k+1}\right)\\
  & \le (k!)^2k^{k-1}\sum_{d=k+1}^{\infty}\frac{2d^{d+k}}{(d!)^2}\\
  & \le 2(k!)^2k^{k-1}\sum_{d=k+1}^{\infty}\frac{d^{d+k}e^{2d-2}}{d^{2d}}\\
  & \le 2(k!)^2k^{k-1}e^{2k}.
\end{align*}
This combines with \eqref{ineq:glueZ} to imply that
\[d(S,T_n)\ge\frac{1}{2(k!)^2k^{k-1}e^{2k}+Z_k(T_k)}>0.\]
Considering a convergent subsequence of $(T_n)$ if necessary,
we deduce that there exists a convergent sequence of trees in which the limit density of every tree $S$ is positive.
\end{proof}

We remark that the choice of the vertices for the gluing operation permits creating sequences of trees with different ``shapes''. For example, the trees can be grown to the depth as the left tree $T_2$ in Figure~\ref{fig:universal} or
along a path as the right tree $T_2$ in Figure~\ref{fig:universal}.

% squashing the bibliography
\let\oldthebibliography=\thebibliography
\let\endoldthebibliography=\endthebibliography
\renewenvironment{thebibliography}[1]{%
\begin{oldthebibliography}{#1}%
	\setlength{\parskip}{0ex}%
	\setlength{\itemsep}{0ex}%
}{\end{oldthebibliography}}

\bibliographystyle{bibstyle}
\bibliography{tprofile}

\end{document}